\newtheorem{theorem}{Theorem}
\newtheorem{proposition}[theorem]{Proposition}
\newtheorem{corollary}[theorem]{Corollary}
\newtheorem{lemma}{Lemma}
\theoremstyle{definition}
\newtheorem{example}{Example}
\renewcommand{\epsilon}{\varepsilon}
\renewcommand{\phi}{\varphi}
\def\R{\mathbb{R}}
\def\N{\mathbb{N}}
\def\Z{\mathbb{Z}}
\DeclareMathOperator{\Span}{span}
\DeclareMathOperator{\interior}{int}
\DeclareMathOperator{\diam}{diam}
\def\cH{\EuScript{H}}
\def\cM{\EuScript{M}}
\def\cMe{\EuScript{M}_{\rm erg}}
\def\cU{\EuScript{U}}
\def\cW{\EuScript{W}}
\def\cX{\EuScript{X}}
\begin{document}
	
\title{Higher-dimensional Nonlinear Thermodynamic Formalism}
	
\begin{abstract}
We introduce a higher-dimensional version of the nonlinear thermodynamic formalism introduced by Buzzi and Leplaideur, in which a potential is replaced by a family of potentials. In particular, we establish a corresponding variational principle and we discuss the existence, characterization, and number of equilibrium measures for this higher-dimensional version.
\end{abstract}

\begin{thanks}
{Partially supported by FCT/Portugal through
UID/MAT/04459/2019. C.H. was supported by FCT/Portugal through the grant PD/BD/135523/2018.}
\end{thanks}

\author{Luis Barreira}\author{Carllos Holanda}
\address{Departamento de Matem\'atica, Instituto Superior T\'ecnico, Universidade de Lisboa, 1049-001 Lisboa, Portugal}
\email{barreira@math.tecnico.ulisboa.pt}
\email{c.eduarddo@gmail.com}

\keywords{Nonlinear thermodynamic formalism, equilibrium measures}
\subjclass[2010]{Primary: 28D20, 37D35.}
\maketitle

\section{Introduction}

Recently, Buzzi and Leplaideur \cite{BL20} introduced a variation of the thermodynamic formalism, which they called \emph{nonlinear thermodynamic formalism}. Roughly speaking, this amounts to compute the topological pressure replacing Birkhoff sums by images of them under a given function (that may be nonlinear and thus the name). Our main aim is twofold:
\begin{enumerate}
\item
to introduce a higher-dimensional version of their notion of topological pressure, replacing a potential by a family of potentials, and to establish a corresponding variational principle;
\item
to discuss the existence, characterization, and number of equilibrium measures, with special attention to the new phenomena that occur in this higher-dimensional version.
\end{enumerate}
We also give a characterization of the nonlinear pressure as a Carath\'eodory dimension, which allows us to extend the notion to noncompact sets.

The most basic notion of the mathematical thermodynamic formalism is topological pressure. It was introduced by Ruelle \cite{R1} for expansive maps and by Walters \cite{W} in the general case. For a continuous map $T \colon X\to X$ on a compact metric space, the \emph{topological pressure} of a
continuous function $\phi\colon X\to\R$ is
defined by
\begin{equation}\label{ggs}
P(\phi)=\lim_{\epsilon \to 0} \limsup_{n \to\infty}
\frac1n\log\sup_E\sum_{x\in E}\exp S_n\phi(x),
\end{equation}
with the supremum taken over all $(n,\epsilon)$-separated sets~$E$ and where $S_n\phi = \sum_{k=0}^{n-1}\phi \circ T^k$.
An important relation between the topological pressure and the Kolmogorov--Sinai entropy is given by the variational principle
\begin{equation}\label{tytuAA}
P(\phi)=\sup_\mu\left(h_\mu(T)+\int_X \phi \, d
\mu\right),
\end{equation}
with the supremum taken over all $T$-invariant probability measures~$\mu$ on~$X$ and where $h_\mu(T)$ denotes the entropy with respect to~$\mu$. This was established by Ruelle \cite{R1} for expansive maps and by Walters \cite{W} in the general case.
The theory is now a broad and active independent field of study with many connections to other areas of mathematics.
We refer the reader to the books \cite{Bar11, B, KH, Ke, PPo, Pes97, Ru, Wa} for many developments.

Building on work on the Curie--Weiss mean-field theory in~\cite{BL7}, the nonlinear topological pressure was introduced in~\cite{BL20} as a generalization of \eqref{ggs} as follows (more precisely, we give an equivalent formulation using separated sets instead of covers). Given a continuous function $F\colon \R \to \R$, the \emph{nonlinear topological pressure} of a continuous function $\phi\colon X \to \R$ is given~by
\begin{equation}\label{nxx}
P_F(\phi) = \lim_{\epsilon \to 0}\limsup_{n \to \infty}\frac1{n}\log \sup_E\sum_{x \in E}\exp \left[nF\left(\frac{S_n\phi(x)}n\right)\right],
\end{equation}
with the supremum taken over all $(n,\epsilon)$-separated sets~$E$. Note that for $F(x)=x$ we recover the classical topological pressure. Buzzi and Leplaideur also established a version of the variational principle in~\eqref{tytuAA}. Namely, under an additional assumption of abundance of ergodic measures (see Section~\ref{sec2.1} for the definition), they proved that
\begin{equation}\label{hola}
P_F(\phi) = \sup_\mu\left(h_\mu(T) + F\biggl(\int_X\phi \,d\mu\biggr)\right),
\end{equation}
with the supremum taken over all $T$-invariant probability measures~$\mu$ on~$X$. In addition, they characterized the equilibrium measures of this thermodynamic formalism, that~is, the invariant probability measures at which the supremum in \eqref{hola} is attained, and they showed that a new type of phase transition can occur. Namely, one may have more than one equilibrium measure, although we still have a central limit theorem (see also~\cite{chaos, thaler}).

As described above, our main aim in the paper is to understand whether and how the results in~\cite{BL20} extend to the higher-dimensional case. This corresponds to replace the functions $F$ and~$\phi$ in~\eqref{nxx}, respectively, by a continuous function $F\colon \R^d \to \R$ and by a family $\Phi= \{\phi_1, \ldots, \phi_d\}$ of continuous functions $\phi_i\colon X \to \R$ for $i=1,\ldots,d$.
The \emph{nonlinear topological pressure} of~$\Phi$ is then defined~by
\[
P_F(\Phi) = \lim_{\epsilon \to 0}\limsup_{n \to \infty}\frac1{n}\log \sup_{E}\sum_{x \in E}\exp \left[nF\biggl(\frac{S_n\phi_1(x)}{n},\ldots, \frac{S_n\phi_d(x)}{n}\biggr)\right],
\]
with the supremum taken over all $(n,\epsilon)$-separated sets~$E$.
Whenever possible, we follow a similar approach to obtain a variational principle and to discuss the existence, characterization, and number of equilibrium measures.
We note that our setup can also be seen as a generalization of related work on the Curie--Weiss--Potts model in~\cite{BLx}.

In particular, assuming that the pair $(T, \Phi)$ has an abundance of ergodic measures, we establish the variational principle
\begin{equation}\label{hola2}
P_F(\Phi) = \sup_\mu\left(h_\mu(T) + F\biggl(\int_X\Phi \,d\mu\biggr)\right),
\end{equation}
with the supremum taken over all $T$-invariant probability measures~$\mu$ on~$X$.
As in~\cite{BL20}, for a certain class of pairs $(T,\Phi)$ we also characterize the equilibrium measures, that~is, the invariant probability measures at which the supremum in \eqref{hola2} is attained. To a moderate extent, our approach also uses some multifractal analysis results taken from~\cite{BSS02}. Consider the sets
\[
L(\Phi) = \biggl\{\biggl(\int_X\phi_1 \,d\mu,\ldots,\int_X \phi_d \,d\mu\biggr) : \mu \textrm{ is $T$-invariant}\biggr\} \subset \R^d
\]
and
\[
C_z(\Phi) = \biggl\{x \in X: \biggl(\lim_{n \to \infty}\frac{S_n\varphi_1(x)}{n},\ldots,\lim_{n \to \infty}\frac{S_n\varphi_d(x)}{n}\biggr) = z\biggr\}.
\]
We reduce the problem of finding equilibrium measures to the one of finding maximizers of the function $E\colon L(\Phi) \to \R$ defined by
\[
E(z) = h(z) + F(z),
\]
where $h\colon L(\Phi) \to \R$ is the topological entropy of $T|_{C_{z}(\Phi)}$. In fact, we show that for each $z \in \interior L(\Phi)$ maximizing $E$ there exists a unique equilibrium measure~$\nu_z$. This is actually a classical equilibrium measure for a certain function $\psi_z$ that depends on the family of functions~$\Phi$. In addition, we give conditions for the uniqueness of the equilibrium measures, both for $d=1$ and for $d>1$ (see Theorems~\ref{UD1} and~\ref{HDC}).

In contrast to the case of $d=1$, we show that there might exist points at the boundary of $L(\Phi)$ maximizing the function~$E$. For these points we do not know if there is a characterization of equilibrium measures as classical equilibrium measures.
Furthermore, we show that for $d>1$ there exist analytic pairs $(T,\Phi)$ with uncountably many equilibrium measures, in strong contrast to what happens for $d=1$ where any analytic pair $(T,\varphi)$ has only finitely many equilibrium measures.

\section{Nonlinear topological pressure}

\subsection{Basic notions}\label{sec2.1}

We first recall the notion of nonlinear topological pressure introduced by Buzzi and Leplaideur in~\cite{BL20} as an extension of the classical topological pressure.
Let $T\colon X \to X$ be a continuous map on a compact metric space $X=(X,d)$. For each $n\in\N$ we consider the distance
\[
d_n(x,y)=\max\bigl\{d(T^k(x),T^k(y)):k=0,\ldots,n-1\bigr\}.
\]
Take $n\in\N$ and $\epsilon>0$. A set $C\subset X$ is said to be an \emph{$(n,\epsilon)$-cover} of~$X$ if $\bigcup_{x \in C}B_n(x,\epsilon) = X$, where
\[
B_n(x,\epsilon) = \bigl\{y \in X: d_n(y,x) < \epsilon\bigr\}.
\]
Given a continuous function $F\colon \R \to \R$, the \emph{nonlinear topological pressure} of a continuous function $\phi\colon X \to \R$ is defined by
\[
P_F(\phi) = \lim_{\epsilon \to 0}\limsup_{n \to \infty}\frac1{n}\log \inf_{C}\sum_{x \in C}\exp \left[nF\biggl(\frac{S_n\phi(x)}{n}\biggr)\right],
\]
where $S_n\phi = \sum_{k=0}^{n-1}\phi \circ T^k$, with the infimum taken over all $(n,\epsilon)$-covers~$C$.

Let $\cM$ be the set of $T$-invariant probability measures on~$X$.
Following~\cite{BL20}, we say that the pair $(T,\phi)$ has an \emph{abundance of ergodic measures} if for each $\mu\in\cM$ and $\epsilon>0$ there exists an ergodic measure $\nu\in\cM$ such that
\[
h_{\nu}(T) \ge h_\mu(T) - \epsilon\quad\text{and}\quad
\biggl\lvert\int_X\phi \,d\nu - \int_X\phi \,d\mu\biggr\rvert < \epsilon.
\]
Assuming that $(T,\phi)$ has an abundance of ergodic measures, they obtained the variational principle
\[
P_F(\phi) = \sup_{\mu \in \cM}\biggl\{h_\mu(T) + F\biggl(\int_X\phi \,d\mu\biggr)\biggr\}.
\]
We say that $\nu \in \cM$ is an \emph{equilibrium measure for $(F,\phi)$} with respect to~$T$ if
\[
P_F(\phi) = h_{\nu}(T) + F\biggl(\int_X\phi \,d\nu\biggr).
\]

In this paper we consider a higher-dimensional generalization of the nonlinear topological pressure. Given $n\in\N$ and $\epsilon > 0$, a set $E \subset X$ is said to be $(n,\epsilon)$-\emph{separated} if $d_{n}(x,y) > \epsilon$ for every $x, y \in E$ with $x \ne y$.
Since $X$~is compact, any $(n,\epsilon)$-separated set has finite cardinality. Let $F\colon \R^d \to \R$ be a continuous function and let $\Phi= \{\phi_1, \ldots, \phi_d\}$ be a family of continuous functions $\phi_i\colon X \to \R$ for $i=1,\ldots, d$.
The \emph{nonlinear topological pressure} of the family $\Phi$ is defined~by
\[
P_F(\Phi) = \lim_{\epsilon \to 0}\limsup_{n \to \infty}\frac1{n}\log \sup_{E}\sum_{x \in E}\exp \left[nF\biggl(\frac{S_n\phi_1(x)}{n},\ldots, \frac{S_n\phi_d(x)}{n}\biggr)\right],
\]
with the supremum taken over all $(n,\epsilon)$-separated sets~$E$.
One can easily verify that the function
\[
\epsilon \mapsto \limsup_{n \to \infty}\frac1{n}\log \sup_{E}\sum_{x \in E}\exp \left[nF\biggl(\frac{S_n\phi_1(x)}{n},\ldots, \frac{S_n\phi_d(x)}{n}\biggr)\right]
\]
is nondecreasing and so $P_F(\Phi)$ is well defined.

We also describe briefly a characterization of the nonlinear topological pressure using $(n,\epsilon)$-covers.	
Let
\[
\cW_n(C)= \sum_{x \in C}\exp \left[nF\biggl(\frac{S_n\phi_1(x)}{n}, \ldots,\frac{S_n\phi_d(x)}{n}\biggr)\right].
\]
Following closely arguments in~\cite{Bar11}, one can show that
\begin{equation}\label{xxx}
P_F(\Phi) = \lim_{\epsilon \to 0}\limsup_{n \to \infty}\frac1{n}\log \inf_{C}\cW_n(C) = \lim_{\epsilon \to 0}\liminf_{n \to \infty}\frac1{n}\log \inf_{C}\cW_n(C),
\end{equation}
with the infimum taken over all $(n,\epsilon)$-covers $C$ of~$X$.

\subsection{Extension to noncompact sets}

Based on work of Pesin and Pitskel' in~\cite{PP84}, we give a characterization of the nonlinear topological pressure as a Carath\'eodory dimension. This allows one to extend the notion to noncompact sets.

We continue to consider a continuous map $T\colon X \to X$ on a compact metric space. Given a finite open cover $\cU$ of~$X$, for each $n \in \N$ let $\cX_n$ be the set of strings $U = (U_1,\ldots, U_n)$ with $U_i \in \cU$ for $i=1,\ldots,n$. We write $l(U) = n$ and we define
\[
X(U)= \bigl\{x \in X: T^{k-1} \in U_k \text{ for } k = 1,\ldots,l(U) \bigr\}.
\]
We say that $\Gamma \subset \bigcup_{n \in \N}\cX_n$ \emph{covers} a set $Z \subset X$ if $Z \subset \bigcup_{U \in \Gamma}X(U)$.

Given a family of continuous functions $\Phi = \{\phi_1,\ldots, \phi_d\}$, for each $n \in \N$ we define $S_n\Phi = (S_n\phi_1,\ldots, S_n\phi_d)$. Moreover, given a function $F\colon \R^d \to \R$, for each $U \in \cX_n$ let
\[
F_{\Phi}(U) = \begin{cases}
\sup_{X(U)} n F\left(\frac1{n}S_n\Phi \right) & \text{if } X(U) \ne \emptyset, \\ -\infty & \text{if } X(U) = \emptyset. \end{cases}
\]
Finally, given a set $Z \subset X$ and a number $\alpha \in \R$, we define
\[
M_Z(\alpha, \Phi, \cU)= \lim_{n \to \infty} \inf_{\Gamma}\sum_{U \in \Gamma}\exp (-\alpha l(U) + F_{\Phi}(U)),
\]
with the infimum taken over all $\Gamma \subset \bigcup_{k\ge n}\cX_k$ covering~$Z$ and with the convention that $\exp (-\infty) = 0$. One can easily verify that the map $\alpha \mapsto M_Z(\alpha, \Phi, \cU)$ goes from $+\infty$ to zero at a unique $\alpha \in \R$ and so one can define
\[
P_F(Z,\Phi, \cU) = \inf\bigl\{\alpha \in \R: M_Z(\alpha, \Phi, \cU) = 0\bigr\}.
\]
One can proceed as in the proof of Theorem~2.2.1 in~\cite{Bar11} to show that the limit $P_F(Z,\Phi)= \lim_{\diam \cU \to 0} P_F(Z,\Phi, \cU)$ exists.

\begin{theorem}
We have $P_F(\Phi) = P_F(X,\Phi)$.
\end{theorem}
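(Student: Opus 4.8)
The plan is to compare the Carath\'eodory pressure $P_F(X,\Phi,\cU)$ with the cover formulation of $P_F(\Phi)$ in~\eqref{xxx} at a common scale and then to let $\diam\cU\to0$. Throughout I fix a finite open cover~$\cU$ with diameter $\diam\cU$ and Lebesgue number~$\delta$, and use the dictionary between~$\cU$ and the dynamics: each cylinder $X(U)=\bigcap_{k=1}^{l(U)}T^{-(k-1)}U_k$ is open with $d_{l(U)}$-diameter at most $\diam\cU$, while every Bowen ball $B_m(x,\delta/2)$ sits inside some $X(U)$ with $l(U)=m$. Since all averages $\tfrac1n S_n\Phi(x)$ lie in the compact set $K=\prod_{i=1}^d[\min_X\phi_i,\max_X\phi_i]$, the uniform continuity of~$F$ on~$K$ and of the~$\phi_i$ gives a modulus $\gamma(\diam\cU)\to0$ bounding the oscillation of $nF(\tfrac1n S_n\Phi)$ on any set of $d_n$-diameter at most $\diam\cU$; this single estimate lets one pass between $F_\Phi(U)$ and the values of $nF(\tfrac1n S_n\Phi)$ at individual points.

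For $P_F(X,\Phi)\le P_F(\Phi)$ I would argue with single-length covering families, which is routine. For each large~$n$ take a near-optimal $(n,\epsilon)$-cover $C_n$ with $\epsilon\le\delta/2$, replace each center~$x$ by a cylinder $U_x$ of length~$n$ with $B_n(x,\epsilon)\subset X(U_x)$, and set $\Gamma_n=\{U_x:x\in C_n\}$, which covers~$X$. The oscillation bound yields $F_\Phi(U_x)\le nF(\tfrac1n S_n\Phi(x))+n\gamma(\diam\cU)$, whence
\[
\sum_{U\in\Gamma_n}\exp\bigl(-\alpha n+F_\Phi(U)\bigr)\le e^{n(\gamma(\diam\cU)-\alpha)}\cW_n(C_n).
\]
Since $\tfrac1n\log\cW_n(C_n)$ stays below the cover pressure at scale~$\epsilon$ up to $o(1)$, this forces $M_X(\alpha,\Phi,\cU)=0$ for every $\alpha$ exceeding that pressure plus $\gamma(\diam\cU)$; letting $\diam\cU\to0$ (so $\epsilon\to0$ and $\gamma\to0$) and invoking~\eqref{xxx} gives the inequality.

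The reverse inequality $P_F(X,\Phi)\ge P_F(\Phi)$ is the main obstacle, and a direct combinatorial matching breaks down. The natural move --- truncating every string of a covering family to the minimal length~$n$ to produce a single-length cylinder cover --- requires comparing $l(U)F(\tfrac1{l(U)}S_{l(U)}\Phi)$ with $nF(\tfrac1n S_n\Phi)$ across different averaging windows, and because~$F$ is nonlinear the sequence $n\mapsto nF(\tfrac1n S_n\Phi)$ is not (almost) additive, so the telescoping that closes the classical case $F=\mathrm{id}$ is unavailable and the discrepancy is of order~$l(U)$ rather than $o(l(U))$. I would instead bound from below through an invariant measure, which never compares~$F$ across windows. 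Fix an ergodic $\mu\in\cM$. By the Brin--Katok local entropy theorem $\mu(X(U))\le\exp\bigl(-l(U)(h_\mu(T)-\eta)\bigr)$ and by Birkhoff's ergodic theorem $\tfrac1{l(U)}S_{l(U)}\Phi\approx\int_X\Phi\,d\mu$ on every cylinder of length $\ge n$ meeting a set~$G$ with $\mu(G)\ge1-\eta$, once~$n$ is large. Hence, on such cylinders,
\[
\exp\bigl(-\alpha l(U)+F_\Phi(U)\bigr)\ge\mu(X(U))\exp\Bigl(l(U)\bigl[h_\mu(T)+F\bigl(\textstyle\int_X\Phi\,d\mu\bigr)-\alpha-\eta'\bigr]\Bigr),
\]
with $\eta'\to0$ as $\eta,\diam\cU\to0$. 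Since any $\Gamma$ covering~$X$ satisfies $\sum_{U\in\Gamma}\mu(X(U))\ge\mu(G)$, for $\alpha<h_\mu(T)+F(\int_X\Phi\,d\mu)-\eta'$ the exponent is positive and $l(U)\ge n$, so the sum is at least $\mu(G)$; thus $M_X(\alpha,\Phi,\cU)>0$ and $P_F(X,\Phi,\cU)\ge h_\mu(T)+F(\int_X\Phi\,d\mu)-\eta'$. Taking the supremum over ergodic~$\mu$, letting $\diam\cU\to0$, and using the abundance of ergodic measures to pass to the supremum over all of~$\cM$, I obtain $P_F(X,\Phi)\ge\sup_{\mu\in\cM}\bigl(h_\mu(T)+F(\int_X\Phi\,d\mu)\bigr)$, which equals $P_F(\Phi)$ by the variational principle~\eqref{hola2}. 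The crux is thus isolating the nonlinear term at the Birkhoff average via the ergodic theorem, which is precisely what dissolves the cross-window comparison that obstructs the combinatorial route.
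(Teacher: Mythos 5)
Your first direction, $P_F(X,\Phi)\le P_F(\Phi)$, is sound and is essentially what the paper does: compare single-length cylinder covers built from $(n,\epsilon)$-covers, using uniform continuity of $F$ and the $\phi_i$ to control the oscillation of $nF(\tfrac1nS_n\Phi)$ on sets of small $d_n$-diameter.

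The lower bound is where there is a genuine gap. You prove $P_F(X,\Phi)\ge h_\mu(T)+F\bigl(\int_X\Phi\,d\mu\bigr)$ for every \emph{ergodic} $\mu$, and then convert this into $P_F(X,\Phi)\ge P_F(\Phi)$ by invoking the abundance of ergodic measures and the variational principle~\eqref{hola2}. But the theorem carries no such hypothesis: it is stated for an arbitrary continuous map $T$ on a compact metric space and arbitrary continuous $F$ and $\Phi$. Without abundance (or convexity of $F$), the paper only establishes the one-sided inequalities $\sup_{\mu\in\cMe}\bigl\{h_\mu(T)+F\bigl(\int_X\Phi\,d\mu\bigr)\bigr\}\le P_F(\Phi)\le\sup_{\mu\in\cM}\bigl\{h_\mu(T)+F\bigl(\int_X\Phi\,d\mu\bigr)\bigr\}$ (see~\eqref{EGD2x} and Lemma~\ref{PFT<}), and these two suprema need not coincide; the nonlinearity of $F$ is exactly what breaks the usual ergodic-decomposition argument. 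So your Brin--Katok/Birkhoff estimate only reaches the ergodic supremum, which may fall strictly short of $P_F(\Phi)$, and the appeal to Theorems~\ref{VP} and~\ref{VPC} imports hypotheses the theorem does not have (besides being a forward reference, since those results come later in the paper and are themselves proved from the cover formulation~\eqref{xxx}). The paper avoids measures altogether: it introduces the single-length quantity $Z_n(\Phi,\cU)=\inf_\Gamma\sum_{U\in\Gamma}\exp F_\Phi(U)$, proves submultiplicativity so that $Z(\Phi,\cU)=\lim_n\tfrac1n\log Z_n(\Phi,\cU)$ exists, and then identifies $\lim_{\diam\cU\to0}Z(\Phi,\cU)$ with both $P_F(X,\Phi)$ and $P_F(\Phi)$ by the combinatorial arguments of Lemmas~2.2.5 and~2.2.6 in~\cite{Bar11}. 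If you want to keep your measure-theoretic route, you must either add the abundance (or convexity) hypothesis to the statement, or find a direct combinatorial argument for the reverse inequality; as written, the proposal proves a strictly weaker, conditional version of the theorem.
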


\begin{proof}
The proof is obtained modifying arguments in Section~4.2.3 of~\cite{Bar11} and so we only give a brief sketch. Given a finite open cover $\cU$ of~$X$, we define
\[
Z_n(\Phi,\cU)= \inf_{\Gamma}\sum_{U \in \Gamma}\exp F_{\Phi}(U),
\]
with the infimum taken over all $\Gamma \subset \cX_n$ covering~$X$. Given $\Gamma_1 \subset \cX_{n_1}$ and $\Gamma_2 \subset \cX_{n_2}$, let
\[
\Gamma' = \bigl\{UV: U \in \Gamma_1 \text{ and } V \in \Gamma_2\bigr\}.
\]
Note that if $\Gamma_1$ and $\Gamma_2$ cover~$X$, then $\Gamma'$ also covers~$X$. Moreover,
\[
F_{\Phi}(UV) \le F_{\Phi}(U) + F_{\Phi}(V)
\]
for each $UV \in \Gamma'$. We have
\[
\begin{split}
Z_{n_1 + n_2}(\Phi, \cU) &\le \sum_{UV \in \Gamma'}\exp F_{\Phi}(UV)\\
& \le \sum_{U \in \Gamma_1}\exp F_{\Phi}(U) \sum_{V \in \Gamma_2}\exp F_{\Phi}(V)
\end{split}
\]
and so
\[
Z_{n_1+n_2}(\Phi,\cU) \le Z_{n_1}(\Phi,\cU)Z_{n_2}(\Phi,\cU).
\]
Therefore, one can define
\[
Z(\Phi,\cU)= \lim_{n \to \infty}\frac1{n}\log Z_n(\Phi,\cU).
\]
Finally, it follows as in
Lemmas~2.2.5 and~2.2.6 in~\cite{Bar11} that
\[
\lim_{\diam \cU \to 0}Z(\Phi,\cU) = P_F(X,\Phi)
\]
and
\[
P_F(\Phi) = \lim_{\diam \cU \to 0}Z(\Phi,\cU).
\]
This yields the desired result.
\end{proof}

\section{Variational Principle}

In this section we establish a variational principle for the nonlinear topological pressure.

Let $T\colon X \to X$ be a continuous map on a compact metric space and let $\Phi= \{\phi_1,\ldots,\phi_d\}$ be a family of continuous functions.
We say that the pair $(T,\Phi)$ has an \emph{abundance of ergodic measures} if for each $\mu\in\cM$ and $\epsilon>0$ there exists an ergodic measure $\nu\in\cM$ such that $h_{\nu}(T) \ge h_\mu(T) - \epsilon$ and
\[
\biggl\lvert \int_X\phi_i \,d\nu - \int_X\phi_i \,d\mu\biggr\rvert < \epsilon\quad\text{for} \ i =1,\ldots,d.
\]
Moreover, we say that $T$ has \emph{entropy density of ergodic measures} if for every $\mu \in \cM$ there exist ergodic measures $\nu_n\in \cM$ for $n\in\N$ such that $\nu_n \to \mu$ in the weak$^*$ topology and $h_{\nu_n}(T) \to h_\mu(T)$ when $n\to\infty$.
Note that if $T$ has entropy density of ergodic measures, then the pair $(T,\Phi)$ has an abundance of ergodic measures for any family of continuous functions~$\Phi$.

In order to give plenty examples of abundance of ergodic measures we first recall a few notions.
Given $\delta>0$, we say that $T$~has \emph{weak specification at scale~$\delta$} if there exists $\tau \in \N$ such that for every $(x_1,n_1),\ldots,(x_k,n_k) \in X \times \N$ there are $y \in X$ and times $\tau_1,\ldots, \tau_{k-1} \in \N$ such that $\tau_i \le \tau$ and
\[
d_{n_i}(T^{s_{i-1} + \tau_{i-1}}(y),x_i) < \delta \quad \text{for} \ i=1,\ldots, k,
\]
where $s_i = \sum_{i=1}^{i}n_i + \sum_{i=1}^{i-1}\tau_i$ with $n_0 = \tau_0 = 0$.
When one can take $\tau_i = \tau$ for $i =1,\ldots,k-1$, we say that $T$~has \emph{specification at scale~$\delta$}. Finally, we say that $T$~has \emph{weak specification} if it has weak specification at every scale~$\delta$ and, analogously, we say that $T$~has \emph{specification} if it has specification at every scale~$\delta$.

It was shown in~\cite{CLT20} that a continuous map $T\colon X \to X$ on a compact metric space with the weak specification property such that the entropy map $\mu \mapsto h_\mu(T)$ is upper semicontinuous, has entropy density of ergodic measures. In particular, this implies that the pair $(T,\Phi)$ has abundance of ergodic measures for any family of continuous functions~$\Phi$. Some examples of maps with abundance of ergodic measures include expansive maps with specification or with weak specification, topologically transitive locally maximal hyperbolic sets for diffeomorphisms, and transitive topological Markov chains.

The following theorem establishes a variational principle for the nonlinear topological pressure.

\begin{theorem}\label{VP}
Let $T\colon X \to X$ be a continuous map on a compact metric space and let $\Phi=\{\phi_1,\ldots, \phi_d\}$ be a family of continuous functions. Given a continuous function $F\colon \R^d \to \R$, if the pair $(T,\Phi)$ has an abundance of ergodic measures, then
\begin{equation}\label{vv3}
P_F(\Phi) = \sup_{\mu \in \cM}\biggl\{h_\mu(T) + F\biggl(\int_X\Phi \,d\mu\biggr) \biggr\},
\end{equation}
where $\int_X\Phi \,d\mu = \big(\int_X\phi_1 \,d\mu,\ldots, \int_X\phi_{d} \,d\mu\big)$.
\end{theorem}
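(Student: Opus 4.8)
The plan is to prove the two inequalities in \eqref{vv3} separately. Throughout, write $\langle q, \Phi\rangle = \sum_{i=1}^d q_i\phi_i$ for $q=(q_1,\ldots,q_d)\in\R^d$, and note that since each $\phi_i$ is continuous on the compact space $X$, all the averages $\frac1n S_n\Phi(x)$ lie in a fixed compact set $K\subset\R^d$, on which $F$ is uniformly continuous with some modulus~$\omega$. The common thread in both directions is to control how the weights $\exp[nF(\frac1n S_n\Phi(x))]$ over an $(n,\epsilon)$-separated set distribute according to the value of the vector of Birkhoff averages $\frac1n S_n\Phi(x)$, and then to match these values with the integrals $\int_X\Phi\,d\mu$.

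For the lower bound, fix $\mu\in\cM$. Using the abundance of ergodic measures I would first replace $\mu$ by an ergodic $\nu\in\cM$ with $h_\nu(T)\ge h_\mu(T)-\epsilon'$ and with $\int_X\Phi\,d\nu$ within $\epsilon'$ of $\int_X\Phi\,d\mu$; by continuity of $F$ it then suffices to bound $P_F(\Phi)$ below by $h_\nu(T)+F(\int_X\Phi\,d\nu)$. For ergodic $\nu$ the Birkhoff ergodic theorem gives $\frac1n S_n\phi_i(x)\to\int_X\phi_i\,d\nu$ almost everywhere, so a Katok-type construction produces, for large~$n$ and small~$\epsilon$, an $(n,\epsilon)$-separated set $E$ with $|E|\ge\exp(n(h_\nu(T)-\gamma))$ all of whose points satisfy $\lvert\frac1n S_n\Phi(x)-\int_X\Phi\,d\nu\rvert<\beta$. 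Each such point then carries weight at least $\exp(n(F(\int_X\Phi\,d\nu)-\omega(\beta)))$, so summing over~$E$ and taking $\frac1n\log$, $\limsup_{n}$, $\epsilon\to0$, and finally $\beta,\gamma,\epsilon'\to0$ yields $P_F(\Phi)\ge h_\mu(T)+F(\int_X\Phi\,d\mu)$; taking the supremum over $\mu$ gives one inequality.

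For the upper bound I would cover $K$ by finitely many balls $B_1,\ldots,B_N$ of radius $\delta$ centered at points $a_1,\ldots,a_N$, and for an arbitrary $(n,\epsilon)$-separated set~$E$ split the sum according to which $B_j$ contains $\frac1n S_n\Phi(x)$. On the piece $E_j$ the weight is at most $\exp(nF(a_j)+n\omega(\delta))$ times the cardinality of $E_j$, and using the elementary bound $1\le\exp(\langle q, S_n\Phi(x)-na_j\rangle + n\lvert q\rvert\delta)$, valid for every $q\in\R^d$ and every $x\in E_j$, this cardinality is controlled by the classical separated-set sum for the potential $\langle q,\Phi\rangle$. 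Passing to $\frac1n\log$, $\limsup_n$ and $\epsilon\to0$, the finitely many pieces contribute no exponential growth and the classical variational principle identifies the growth rate of each piece, so that
\[
P_F(\Phi)\le\max_{1\le j\le N}\inf_{q\in\R^d}\Bigl[F(a_j)+P(\langle q,\Phi\rangle)-\langle q,a_j\rangle+\lvert q\rvert\delta\Bigr]+\omega(\delta).
\]
The inner infimum is a Legendre transform which, by the conditional variational principle of multifractal analysis (cf.\ \cite{BSS02}), equals $\sup\{h_\mu(T):\int_X\Phi\,d\mu=a_j\}$ up to the $\lvert q\rvert\delta$ correction; adding $F(a_j)$ then gives a quantity bounded above by the right-hand side of \eqref{vv3}, and letting $\delta\to0$ completes the proof.

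The main obstacle is the upper bound, and specifically the identification of the inner infimum with the constrained entropy supremum. This amounts to an interchange of $\inf_q$ and $\sup_\mu$ in $\inf_q\sup_\mu[h_\mu(T)+\langle q,\int_X\Phi\,d\mu-a_j\rangle]$, for which the easy weak-duality inequality is automatic but the reverse requires either upper semicontinuity and affineness of the entropy map together with a minimax argument, or the multifractal results quoted from \cite{BSS02}. A related delicate point is that the optimizing $q$ may be unbounded when $a_j$ approaches the boundary of $L(\Phi)$, and one must separately discard the balls $B_j$ whose centers lie far from $\overline{L(\Phi)}$, since for those there are asymptotically no orbits with $\frac1n S_n\Phi(x)\in B_j$. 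Controlling these boundary contributions uniformly as $\delta\to0$ is where the argument needs the most care.
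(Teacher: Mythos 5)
Your lower bound is essentially the paper's argument: reduce to an ergodic measure via the abundance hypothesis, then use Birkhoff plus an entropy count (the paper runs Shannon--McMillan--Breiman and Egorov against a minimal $(n,\epsilon)$-cover rather than invoking a Katok-type separated set, but these are interchangeable). The problem is your upper bound. After splitting the separated set into pieces $E_j$ according to which ball $B(a_j,\delta)$ contains $\frac1n S_n\Phi(x)$, you control $|E_j|$ by the partition sum of $\langle q,\Phi\rangle$ and then need the identity
\[
\inf_{q\in\R^d}\bigl[P(\langle q,\Phi\rangle)-\langle q,a_j\rangle\bigr]=\sup\Bigl\{h_\mu(T):\int_X\Phi\,d\mu=a_j\Bigr\}.
\]
Only the weak-duality inequality ``$\ge$'' is automatic, and that is the wrong direction for your purpose: you need the infimum bounded \emph{above} by the constrained entropy supremum. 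That reverse inequality is exactly an interchange of $\inf_q$ and $\sup_\mu$, and under the hypotheses of Theorem~\ref{VP} --- an arbitrary continuous map with continuous potentials, no upper semicontinuity of the entropy map, and no regularity of the pressure function --- it is not available. The conditional variational principle of \cite{BSS02} that you cite is proved only for the restrictive ``regular'' setting (unique equilibrium states, smooth strictly convex pressure, u.s.c.\ entropy) that this paper invokes later for equilibrium measures, not for the variational principle itself; and a minimax argument would again require upper semicontinuity of $\mu\mapsto h_\mu(T)$, which is not assumed. On top of this, the $\lvert q\rvert\delta$ correction makes the infimum degenerate for $a_j$ near or outside $L(\Phi)$, a difficulty you flag but do not resolve.

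The paper avoids all of this with a Misiurewicz-style construction: on the dominant piece $\Lambda^i_k$ (which by pigeonholing carries at least a fraction $1/L$ of the weight, hence $|\Lambda^i_k|\ge\exp[n_k(P_F(\Phi)-F(z_i)-4r)]$), one forms the uniform atomic measure and its Ces\`aro averages along the orbit, passes to a weak$^*$ limit $\mu^i\in\cM$, and obtains directly $h_{\mu^i}(T)\ge\limsup_k\frac1{n_k}\log|\Lambda^i_k|$ together with $\int_X\Phi\,d\mu^i\in\overline{B(z_i,r_i)}$. This produces the required invariant measure without any Legendre duality, without upper semicontinuity of entropy, and with no boundary issues. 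I recommend you replace the Legendre-transform step by this empirical-measure argument; as written, the upper bound does not close under the stated hypotheses.
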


\begin{proof}
To the possible extent we follow arguments in~\cite{BL20} for a single function. We divide the proof into two lemmas.

\begin{lemma}\label{PTF>}
We have
\[
P_F(\Phi) \ge \sup_{\mu \in \cM}\biggl\{h_\mu(T) + F\biggl(\int_X\Phi \,d\mu\biggr)\biggr\}.
\]
\end{lemma}

\begin{proof}[Proof of the lemma]
Given $r>0$, since $X$ is compact there exist $\delta,\epsilon > 0$ such that
\[
|\phi_{i}(x) - \phi_{i}(y)|<\delta/2 \quad \text{whenever} \ d(x,y) < \epsilon
\]
for $i=1,\ldots,d$ and
\[
|F(v) - F(w)| < r \quad \text{whenever} \ \|v-w\| < \delta.
\]
For definiteness we shall take the $\ell^\infty$ norm on~$\R^d$.
Now let $\mu \in \cM$ be an ergodic measure. By Birkhoff's ergodic theorem and the Shannon--McMillan--Breiman theorem, together with Egorov's theorem, there exist a set $A\subset X$ of measure $\mu(A) > 1- r$ and $N \in\N$ such that
\begin{equation}\label{SNM}
\biggl\lvert \frac{S_n\phi_i(x)}n - \int_X\phi_i \,d\mu \biggr\rvert <\delta/2
\end{equation}
for all $i=1,\ldots,d$ and
\begin{equation}\label{TTA}
\biggl\lvert \frac1{n}\log \mu(B_n(x,2\epsilon)) + h_\mu(T) \biggr\rvert < r,
\end{equation}
for $x \in A$ and $n > N$.

Now let $C$ be an $(n,\epsilon)$-cover with $\cW_n(C)$ minimal and let $D \subset C$ be a minimal $(n,\epsilon)$-cover of~$A$.
For each $x \in D$, the ball $B_n(x,\epsilon)$ intersects~$A$ at some point~$y$ (otherwise one could discard the point $x$ in~$D$). Note that
\[
d(T^k(x),T^k(y))<\epsilon\quad\text{for} \ k=0,\ldots,n-1.
\]
Hence, it follows from \eqref{SNM} that
\[
\begin{split}
\biggl\lvert \frac{S_n\phi_{i}(x)}n - \int_X\phi_{i} \,d\mu\biggr\rvert &\le \frac1n\lvert S_n\phi_{i}(x) - S_n\phi_{i}(y)\rvert \\
&\phantom{\le}+ \biggl\lvert \frac{S_n\phi_{i}(y)}n - \int_X\phi_{i} \,d\mu\biggr\rvert
< \delta/2 + \delta/2 = \delta
\end{split}
\]
for $i =1,\ldots,d$ and so
\[
\biggl\lvert F\biggl(\frac{S_n\phi_1(x)}{n},\ldots,\frac{S_n\phi_d(x)}{n}\biggr)- F\biggl(\int_X\Phi \,d\mu\biggr)\biggr\rvert < r.
\]
Moreover, $B_n(x,\epsilon) \subset B_n(y,2\epsilon)$ and so it follows from \eqref{TTA} that
\[
1-r< \mu(A) \le |D|\max_{x \in D}\mu(B_n(x,\epsilon)) \le |D|e^{-n(h_\mu(T) - r)},
\]
where $|D|$ denotes the cardinality of~$D$.
Therefore,
\[
\begin{split}
\cW_n(C) &\ge |D|\exp\bigg[nF\biggl(\int_X\Phi \,d\mu\biggr) - r\bigg]\\
&\ge (1-r)\exp[n(h_\mu(T) - r)] \exp\bigg[nF\biggl(\int_X\Phi \,d\mu\biggr) - r\bigg]
\end{split}
\]
for any sufficiently large $n \in \N$.
It follows from \eqref{xxx} that
\[
P_F(\Phi) \ge h_\mu(T) + F\biggl(\int_X\Phi \,d\mu \biggr) - 2r.
\]
Finally, by the arbitrariness of $r > 0$ we obtain
\begin{equation}\label{EGD2x}
P_F(\Phi) \ge h_\mu(T) + F\biggl(\int_X\Phi \,d\mu \biggr).
\end{equation}

Now we consider an arbitrary measure $\nu \in \cM$. Since $(T,\Phi)$ has an abundance of ergodic measures and $F$~is continuous, for each $\epsilon > 0$ there exists an ergodic measure $\mu \in \cM$ such that
\[
\biggl\lvert F\biggl(\int_X\Phi \,d\nu\biggr) - F\biggl(\int_X\Phi \,d\mu\biggr)\biggr\rvert < \epsilon \quad \text{and} \quad h_\mu(T) \ge h_{\nu}(T) - \epsilon.
\]
By \eqref{EGD2x} we have
\begin{equation}\label{AEM}
P_F(\Phi) \ge h_\mu(T) + F\biggl(\int_X\Phi \,d\mu\biggr) \ge h_{\nu}(T) + F\biggl(\int_X\Phi \,d\nu\biggr) - 2\epsilon
\end{equation}
and the desired result follows from the arbitrariness of~$\epsilon$.
\end{proof}

We also obtain the reverse inequality.

\begin{lemma}\label{PFT<}
We have
\[
P_F(\Phi) \le \sup_{\mu \in \cM}\biggl\{h_\mu(T) + F\biggl(\int_X\Phi \,d\mu \biggr)\biggr\}.
\]
\end{lemma}

\begin{proof}[Proof of the lemma]
Given $r > 0$, take $\epsilon > 0$ such that
\[
\limsup_{n \to \infty}\frac1{n}\log \inf_{C}\cW_n(C) > P_F(\Phi) - r,
\]
with the infimum taken over all $(n,\epsilon)$-covers~$C$.
Since each $(n,\epsilon)$-separated set $E_n$ is an $(n,\epsilon)$-cover, we have
\[
\limsup_{n \to \infty}\frac1{n}\log \cW_n(E_n) > P_F(\Phi) - r
\]
and there exists a diverging subsequence $(n_k)_{k \in\N}$ such that
\begin{equation}\label{WNK}
\cW_{n_k}(E_{n_{k}}) \ge \exp [n_k (P_F(\Phi) - 2r)]\quad\text{for} \ k\in\N.
\end{equation}

We cover the compact set $\Phi(X)$ by balls $B(z_i, r_i)$ for $i=1,\ldots, L$ such that $|F(z)-F(z_i)|<r$ for all $z\in B(z_i, r_i)$ and $i=1,\ldots, L$.
Now let
\[
\Lambda^{i}_{k} = \biggl\{x \in E_{n_k}: \biggl(\frac{S_{n_k}\phi_1(x)}{n_k},\ldots, \frac{S_{n_k}\phi_{d}(x)}{n_k}\biggr) \in B(z_i,r_i)\biggr\}.
\]
Note that
\[
\cW_{n_k}(E_{n_k}) \le \sum_{i=1}^{L}\cW_{n_k}(\Lambda^{i}_{k}) \le L\cW_{n_k}(\Lambda^{i}_{k}) \quad \text{for some $i \in \{1,\ldots,L\}$}
\]
and so it follows from \eqref{WNK} that
\[
\begin{split}
\exp [n_k(P_F(\Phi) - 2r)]
& \le \cW_{n_k}(E_{n_k})\\
& \le L\cW_{n_k}(\Lambda^i_k) \le L|\Lambda^i_k|\exp [n_k (F(z_i) + r)].
\end{split}
\]
This implies that
\begin{equation}\label{LNK}
|\Lambda^i_k| \ge \exp [n_k(P_F(\Phi) - F(z_i) -4r)]
\end{equation}
for any sufficiently large~$k$.
Proceeding as in the proof of the variational principle in~\cite{Mis76}, we also consider the measures
\[
\mu^i_k= \frac1{|\Lambda^i_k|}\sum_{x \in \Lambda^i_k}\delta_{x} \quad \text{and} \quad \nu^i_k= \frac1{n_k}\sum_{j=0}^{n_k - 1}\mu^i_k T^{-j}.
\]
Without loss of generality, one can assume that $\nu^i_k$ converges to a $T$-invariant measure $\mu^i$ in the weak$^*$ topology satisfying
\begin{equation}\label{HMT}
h_{\mu^i}(T) \ge \limsup_{n_k \to \infty}\frac1{n_k}\log|\Lambda^i_k|.
\end{equation}
By the definition of $\nu^i_k$ we have
\[
\begin{split}
\int_X\Phi \,d\mu^i
&= \lim_{k \to \infty}\int_X\Phi \,d\nu^i_k \\
&=\lim_{k \to \infty}\biggl(\int_X\frac{S_{n_k}\phi_1}{n_k} \,d\mu^i_k,\ldots,\int_X\frac{S_{n_k} \phi_{d}}{n_k} \,d\mu^i_k\biggr) \in \overline{B(z_i,r_i)}.
\end{split}
\]
Hence, by \eqref{LNK} and \eqref{HMT} we obtain
\[
\begin{split}
h_{\mu^i}(T) + F\biggl(\int_X\Phi \,d\mu^i \biggr) &\ge P_F(\Phi) - F(z_i) -4r + F(z_i) - r\\
& = P_F(\Phi)- 5r.
\end{split}
\]
The desired result follows from the arbitrariness of~$r$.
\end{proof}

Lemmas~\ref{PTF>} and~\ref{PFT<} establish the statement in the theorem.
\end{proof}

For a general continuous map~$T$, we obtain a variational principle for an arbitrary convex function~$F$.

\begin{theorem}\label{VPC}
Let $T\colon X \to X$ be a continuous map on a compact metric space and let $\Phi=\{\phi_1,\ldots, \phi_d\}$ be a family of continuous functions. If $F\colon \R^d \to \R$ is a convex continuous function, then identity \eqref{vv3} holds.
\end{theorem}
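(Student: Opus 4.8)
The plan is to recognize that almost all of the work is already done in the proof of Theorem~\ref{VP}, and that the convexity of~$F$ supplies exactly the one missing ingredient. Inspecting the proof of Lemma~\ref{PFT<}, the upper bound $P_F(\Phi) \le \sup_{\mu}\{h_\mu(T) + F(\int_X \Phi \, d\mu)\}$ is obtained by a Misiurewicz-type covering argument that makes no use whatsoever of the abundance hypothesis; it therefore remains valid here verbatim. Similarly, inequality~\eqref{EGD2x} in the proof of Lemma~\ref{PTF>}, namely $P_F(\Phi) \ge h_\mu(T) + F(\int_X \Phi \, d\mu)$, was established for every \emph{ergodic} measure~$\mu$ \emph{before} abundance was invoked (abundance entered only afterwards, to pass to non-ergodic measures). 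Hence the one thing left to prove is the lower bound for non-ergodic measures, and this is precisely where convexity will replace abundance.

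First I would fix an arbitrary $\mu \in \cM$ and write its ergodic decomposition $\mu = \int_{\cMe} \nu \, d\tau(\nu)$, where $\tau$ is a probability measure on the set $\cMe$ of ergodic measures. Two standard facts then apply: the entropy map is affine under this decomposition, so that $h_\mu(T) = \int_{\cMe} h_\nu(T) \, d\tau(\nu)$; and for each~$i$ one has $\int_X \phi_i \, d\mu = \int_{\cMe}(\int_X \phi_i \, d\nu)\, d\tau(\nu)$, which as a vector identity reads $\int_X \Phi \, d\mu = \int_{\cMe}(\int_X \Phi \, d\nu)\, d\tau(\nu)$. Since $F\colon \R^d \to \R$ is convex, Jensen's inequality applied to the $\R^d$-valued map $\nu \mapsto \int_X \Phi \, d\nu$ then yields $F(\int_X \Phi \, d\mu) \le \int_{\cMe} F(\int_X \Phi \, d\nu)\, d\tau(\nu)$. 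Combining this with the affinity of entropy and with~\eqref{EGD2x} applied to each ergodic~$\nu$, I would estimate
\[
h_\mu(T) + F\biggl(\int_X \Phi \, d\mu\biggr) \le \int_{\cMe}\biggl(h_\nu(T) + F\biggl(\int_X \Phi \, d\nu\biggr)\biggr) d\tau(\nu) \le \int_{\cMe} P_F(\Phi)\, d\tau(\nu) = P_F(\Phi).
\]
Taking the supremum over $\mu \in \cM$ gives the lower bound, which together with Lemma~\ref{PFT<} establishes~\eqref{vv3}.

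The main obstacle is not the core computation but the verification that the integrals above are legitimate: one must check that $\nu \mapsto h_\nu(T)$ and $\nu \mapsto F(\int_X \Phi \, d\nu)$ are $\tau$-measurable and $\tau$-integrable (the latter being continuous and bounded on the compact set of invariant measures, hence harmless, while the former relies on the standard measurable-affinity statement for entropy), and that the affinity identity and the vector-valued Jensen inequality hold in the required generality, including when the entropy is infinite. It is also worth recording an alternative route that makes the role of convexity fully transparent and bypasses the ergodic decomposition entirely: writing $F = F^{**}$ via Fenchel--Moreau as $F(z) = \sup_q(\langle q,z\rangle - F^*(q))$, the pointwise bound $nF(S_n\Phi(x)/n) \ge \langle q, S_n\Phi(x)\rangle - nF^*(q)$ gives $P_F(\Phi) \ge P(\sum_i q_i\phi_i) - F^*(q)$ for each~$q$, and taking the supremum over~$q$ together with the classical variational principle for the continuous potential $\sum_i q_i\phi_i$ recovers the right-hand side of~\eqref{vv3}.
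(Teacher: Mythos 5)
Your proposal is correct and follows essentially the same route as the paper: the upper bound is inherited from Lemma~\ref{PFT<} (which never uses abundance), and the lower bound for non-ergodic measures is obtained from \eqref{EGD2x} via the ergodic decomposition, the affinity of the entropy map, and Jensen's inequality applied to the convex function~$F$. The Fenchel--Moreau alternative you sketch at the end is a nice additional observation, but the core argument matches the paper's proof.
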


\begin{proof}
It follows from the proof of Lemma~\ref{PTF>} (see~\eqref{EGD2x}) that
\[
P_F(\Phi) \ge h_\mu(T) + F\biggl(\int_X\Phi \,d\mu \biggr)
\]
for every ergodic measure $\mu \in \cM$.
Now let $\nu \in \cM$ be an arbitrary measure and consider its ergodic decomposition with respect to~$T$. It is described by a probability measure $\tau$ on~$\cM$ that is concentrated on the subset of ergodic measures $\cMe$. We recall that for every bounded measurable function $\psi\colon X \to \R$ we have
\[
\int_X \psi \,d\nu = \int_\cM \biggl(\int_X \psi \,d\mu \biggr) \,d\tau(\mu).
\]

For a convex function $F$ one can use Jensen's inequality to obtain
\[
\begin{split}
F\biggl(\int_X\Phi \,d\nu \biggr) &= F\biggl(\int_\cM\biggl(\int_X\phi_1 \,d\mu\biggr) d\tau(\mu),\ldots,\int_\cM\biggl( \int_X\phi_d \,d\mu\biggr)d\tau(\mu)\biggr)\\
& \le \int_\cM F\biggl(\int_X\Phi \,d\mu\biggr)d\tau(\mu).
\end{split}
\]
Hence,
\begin{equation}\label{CVX}
h_{\nu}(T) + F\biggl(\int_X\Phi \,d\nu \biggr) \le \int_\cM\biggl[h_\mu(T) + F\biggl(\int_X\Phi \,d\mu \biggr)\biggr]d\tau(\mu)
\le P_F(\Phi).
\end{equation}
The desired result follows now readily from Lemma~\ref{PFT<} whose proof does not require an abundance of ergodic measures.
\end{proof}

We also obtain a variational principle over the ergodic measures.

\begin{corollary}
Let $T\colon X \to X$ be a continuous map on a compact metric space, let $\Phi=\{\phi_1,\ldots, \phi_d\}$ be a family of continuous functions, and let $F\colon \R^d \to \R$ be a continuous function. If the pair $(T,\Phi)$ has an abundance of ergodic measures or $F$ is convex, then
\begin{equation}\label{PTOP}
P_F(\Phi) = \sup_{\mu \in \cMe}\biggl\{h_\mu(T) + F\biggl(\int_X\Phi \,d\mu\biggr) \biggr\}.
\end{equation}
\end{corollary}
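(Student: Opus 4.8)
The plan is to show that restricting the supremum from $\cM$ to $\cMe$ does not change its value, and then to invoke the variational principles already established. Since $\cMe \subset \cM$, we trivially have
\[
\sup_{\mu \in \cMe}\biggl\{h_\mu(T) + F\biggl(\int_X\Phi \,d\mu\biggr)\biggr\} \le \sup_{\mu \in \cM}\biggl\{h_\mu(T) + F\biggl(\int_X\Phi \,d\mu\biggr)\biggr\},
\]
and the right-hand side equals $P_F(\Phi)$ by Theorem~\ref{VP} (under abundance of ergodic measures) or Theorem~\ref{VPC} (when $F$ is convex). Thus it remains to establish the reverse inequality $\sup_{\mu \in \cMe}\{\cdots\} \ge P_F(\Phi)$, which I would treat separately in the two cases.

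In the case of an abundance of ergodic measures, I would reuse the approximation argument already carried out at the end of the proof of Lemma~\ref{PTF>}. Given an arbitrary $\nu \in \cM$ and $\epsilon > 0$, abundance provides an ergodic measure $\mu \in \cMe$ with $h_\mu(T) \ge h_\nu(T) - \epsilon$ and $\lvert\int_X \phi_i\,d\mu - \int_X\phi_i\,d\nu\rvert < \epsilon$ for $i = 1,\ldots,d$; by continuity of~$F$ this yields $h_\mu(T) + F(\int_X\Phi\,d\mu) \ge h_\nu(T) + F(\int_X\Phi\,d\nu) - 2\epsilon$ once $\epsilon$ is small. Hence $\sup_{\mu \in \cMe}\{\cdots\}$ dominates $h_\nu(T) + F(\int_X\Phi\,d\nu)$ for every $\nu \in \cM$, and taking the supremum over~$\nu$ gives the required inequality.

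In the convex case, I would instead exploit inequality~\eqref{CVX} from the proof of Theorem~\ref{VPC}. Fix $\nu \in \cM$ and let $\tau$ be the measure on~$\cM$ describing its ergodic decomposition; crucially, $\tau$ is concentrated on~$\cMe$. Then Jensen's inequality, exactly as in~\eqref{CVX}, gives
\[
h_\nu(T) + F\biggl(\int_X\Phi\,d\nu\biggr) \le \int_\cM\biggl[h_\mu(T) + F\biggl(\int_X\Phi\,d\mu\biggr)\biggr]d\tau(\mu),
\]
and since the integrand is integrated against a probability measure supported on~$\cMe$, the right-hand side is at most $\sup_{\mu \in \cMe}\{h_\mu(T) + F(\int_X\Phi\,d\mu)\}$. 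Taking the supremum over $\nu \in \cM$ then yields $\sup_{\mu \in \cM}\{\cdots\} \le \sup_{\mu \in \cMe}\{\cdots\}$, which together with the trivial reverse inequality and Theorem~\ref{VPC} proves~\eqref{PTOP}.

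I expect no serious obstacle, since both halves of the argument are essentially already present in the preceding proofs. The only point requiring care is the passage from the integral over~$\cM$ to the supremum over~$\cMe$ in the convex case, which relies on the standard fact that the ergodic decomposition is concentrated on ergodic measures; this ensures that the integrand need only be controlled $\tau$-almost everywhere, that is, on~$\cMe$, so that bounding it by $\sup_{\cMe}\{\cdots\}$ is legitimate.
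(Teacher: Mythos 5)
Your proposal is correct and follows essentially the same route as the paper: the trivial inequality from $\cMe \subset \cM$, the approximation via abundance of ergodic measures (the paper's inequality~\eqref{AEM}) in the first case, and Jensen's inequality together with the fact that the ergodic decomposition is concentrated on $\cMe$ (the paper's inequality~\eqref{CVX}) in the convex case, concluding with Theorems~\ref{VP} and~\ref{VPC}. No gaps.
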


\begin{proof}
Since $\cMe \subset \cM$, we have
\[
\sup_{\mu \in \cM} \biggl\{h_\mu(T) + F\biggl(\int_X\Phi \,d\mu\biggr)\biggr\} \ge \sup_{\mu \in \cMe}\biggl\{h_\mu(T) + F\biggl(\int_X\Phi \,d\mu\biggr)\biggr\}.
\]

Now we establish the reverse inequality. Assume that the pair $(T,\Phi)$ has an abundance of ergodic measures. It follows from \eqref{AEM} that for each $\nu\in\cM$ and $\epsilon>0$, there exists an ergodic measure $\mu\in\cM$ such that
\[
h_\mu(T) + F\bigg(\int_X\Phi\, d\mu\bigg) \ge h_{\nu}(T) + F\bigg(\int_X\Phi d\nu\bigg)-\epsilon.
\]
Since $\epsilon$ is arbitrary, this readily implies that
\[
\sup_{\mu \in \cM}\biggl\{h_\mu(T) + F\biggl(\int_X\Phi \,d\mu\biggr)\biggr\} \le \sup_{\nu \in \cMe}\biggl\{h_{\nu}(T) + F\biggl(\int_X\Phi \,d\nu\biggr)\biggr\}.
\]
Finally, it follows from Theorem~\ref{VP} that identity \eqref{PTOP} holds.

Now assume that $F$ is convex. It follows from \eqref{CVX} that
\[
h_{\nu}(T) + F\biggl(\int_X\Phi \,d\nu \biggr) \le \sup_{\mu \in \cMe}\biggl\{h_\mu(T) + F\biggl(\int_X\Phi \,d\mu\biggr)\biggr\}
\]
for each $\nu\in\cM$. Therefore,
\[
\sup_{\nu \in \cM}\biggl\{h_{\nu}(T) + F\biggl(\int_X\Phi \,d\nu \biggr)\biggr\} \le \sup_{\mu \in \cMe}\biggl\{h_\mu(T) + F\biggl(\int_X\Phi \,d\mu\biggr)\biggr\}
\]
and applying Theorem \ref{VPC} we also obtain identity~\eqref{PTOP}.
\end{proof}

\section{Equilibrium measures: existence and characterization}

In this section we consider the problem of characterizing the equilibrium measures of the nonlinear topological pressure.

\subsection{Existence of equilibrium measures}

In view of Theorem~\ref{VP}, we say that $\nu \in \cM$ is an \emph{equilibrium measure for $(F,\Phi)$} with respect to~$T$ if
\[
P_F(\phi) = h_\mu(T) + F\biggl(\int_X\Phi \,d\mu\biggr).
\]
We first formulate a result on the existence of equilibrium measures.

\begin{theorem}\label{EXI}
Let $T\colon X \to X$ be a continuous map on a compact metric space such that the map $\mu \mapsto h_\mu(T)$ is upper semicontinuous, let $\Phi=\{\phi_1,\ldots, \phi_d\}$ be a family of continuous functions, and let $F\colon \R^d \to \R$ be a continuous function. If the pair $(T,\Phi)$ has an abundance of ergodic measures or $F$ is convex, then there exists at least one equilibrium measure for $(F,\Phi)$.
\end{theorem}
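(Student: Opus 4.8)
The plan is to realize an equilibrium measure as a maximizer of the functional
\[
G(\mu) = h_\mu(T) + F\biggl(\int_X\Phi \,d\mu\biggr)
\]
over the compact space $\cM$, and then to invoke the variational principle to identify $\sup_{\mu\in\cM} G(\mu)$ with $P_F(\Phi)$. Since $X$ is compact, $\cM$ is a nonempty compact set in the weak$^*$ topology. For each $i=1,\ldots,d$ the map $\mu \mapsto \int_X\phi_i \,d\mu$ is weak$^*$ continuous because $\phi_i$ is continuous, so the vector-valued map $\mu \mapsto \int_X\Phi \,d\mu$ is continuous; composing with the continuous function $F$, the map $\mu \mapsto F\bigl(\int_X\Phi \,d\mu\bigr)$ is continuous on~$\cM$.

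The next step is to observe that $G$ is upper semicontinuous. By hypothesis the entropy map $\mu \mapsto h_\mu(T)$ is upper semicontinuous, and the sum of an upper semicontinuous function and a continuous function is again upper semicontinuous. Hence $G$ is upper semicontinuous on the compact set~$\cM$, and therefore attains its supremum: there exists $\nu \in \cM$ with $G(\nu) = \sup_{\mu \in \cM} G(\mu)$.

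Finally, I would apply the variational principle. Under the hypothesis that $(T,\Phi)$ has an abundance of ergodic measures this is Theorem~\ref{VP}, while under the hypothesis that $F$ is convex it is Theorem~\ref{VPC}; in either case,
\[
\sup_{\mu \in \cM} G(\mu) = P_F(\Phi).
\]
Consequently $h_\nu(T) + F\bigl(\int_X\Phi \,d\nu\bigr) = G(\nu) = P_F(\Phi)$, which is precisely the defining condition for $\nu$ to be an equilibrium measure for $(F,\Phi)$. The only point requiring care is the upper semicontinuity of~$G$, but that work is already furnished by the hypothesis on the entropy map together with the weak$^*$ continuity of integration against continuous functions; I do not expect any genuine obstacle, since the needed variational principle is already available in both regimes.
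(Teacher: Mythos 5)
Your proposal is correct and follows essentially the same route as the paper: upper semicontinuity of $\mu \mapsto h_\mu(T) + F\bigl(\int_X\Phi\,d\mu\bigr)$ on the compact set $\cM$ yields a maximizer, which is then identified as an equilibrium measure via Theorem~\ref{VP} or Theorem~\ref{VPC} according to which hypothesis holds. No gaps.
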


\begin{proof}
Since the map $\mu \mapsto h_\mu(T)$ is upper semicontinuous, $F$ is continuous and the map $\mu \mapsto \int_X\psi \,d\mu$ is continuous for each continuous function $\psi\colon X \to \R$, we conclude that $\mu \mapsto h_\mu(T) + F(\int_X\Phi \,d\mu)$ is upper semicontinuous. Together with the compactness of~$\cM$, this guarantees that there exists a measure $\mu_{\Phi} \in \cM$ such that
\[
\sup_{\mu \in \cM} \biggl\{h_\mu(T) + F\biggl(\int_X\Phi \,d\mu\biggr)\biggr\} = h_{\mu_{\Phi}}(T) + F\biggl(\int_X\Phi \,d\mu_{\Phi}\biggr).
\]
Hence, it follows from the variational principles in Theorems~\ref{VP} and~\ref{VPC} that $\mu_{\Phi}$~is an equilibrium measure for $(F,\Phi)$.
\end{proof}

In some cases one can pass to the one-dimensional setting of the nonlinear thermodynamic formalism.

\begin{example}
Consider the function $F\colon \R^d \to \R$ defined by
\[
F(z_1,\ldots,z_d) = f(\alpha_1z_1 + \cdots + \alpha_dz_d),
\]
where $f\colon \R \to \R$ is a continuous function and $\alpha_j \in \R$ for $j =1,\ldots,d$. Then
\[
F\biggl(\int_X\Phi \,d\mu\biggr) = F\biggl(\int_X\phi_1 \,d\mu,\ldots,\int_X\phi_d \,d\mu\biggr) = f\biggl(\int_X\phi \,d\mu\biggr)
\]
for every $\mu \in \cM$, where
\[
\phi=\alpha_1\phi_1 + \cdots + \alpha_d\phi_d.
\]
Moreover, $P_F(\Phi) = P_f(\phi)$ and this implies that $(F,\Phi)$ and $(f,\phi)$ have the same equilibrium measures. In other words, for a function $F$ as above the study of equilibrium measures can be reduced to the case when $d=1$.
\end{example}

Of course, in general the continuous function $F$ can be much more complicated. For instance, the Potts model involves the study of the topological pressure for $F(z_1,\ldots,z_2) = (z_1^2 + \cdots + z_d^2)^{1/2}$.

For the following example, we recall the notion of cohomology. We say that a function $\phi\colon X \to \R$ is \emph{cohomologous} to a function $\psi\colon X \to \R$ (with respect to~$T$) if there exists a measurable bounded function $q\colon X \to \R$ such~that
\[
\phi(x) = \psi(x) + q(T(x)) - q(x)\quad\text{for} \ x \in X.
\]

\begin{example}[Reduction of dimension via cohomology]
Let $T$ be a continuous map on a compact metric space and let $\Phi=\{\phi_1,\ldots, \phi_d\}$ be a family of continuous functions such that the pair $(T,\Phi)$ has an abundance of ergodic measures. Let $F\colon \R^d \to \R$ be a continuous function and assume that $\phi_1$ is cohomologous to $\phi_d$. This implies that $\int_X\phi_1 \,d\mu = \int_X\phi_{d} \,d\mu$ for every $\mu \in \cM$. Therefore,
\[
\begin{split}
F\biggl(\int_X\Phi \,d\mu\biggr) &= F\biggl(\int_X\phi_1 \,d\mu, \int_X\phi_2 \,d\mu,\ldots,\int_X\phi_1 \,d\mu\biggr) \\
&= G\biggl(\int_X\phi_1 \,d\mu,\ldots,\int_X\phi_{d-1} \,d\mu\biggr)
\end{split}
\]
for every $\mu \in \cM$, where
\[
G(z_1,\ldots,z_{d-1}) = F(z_1, z_2,\ldots,z_{d-1},z_1)
\]
for each $(z_1,\ldots,z_d) \in \R^d$. The cohomology assumption also implies that
\[
\|S_n\phi_1-S_n\phi_{d}\|_{\infty}/n \to 0\quad\text{when} \ n \to \infty.
\]
Together with the continuity of~$F$, this implies that $P_{F}(\Phi) = P_{G}(\Psi)$, where $\Psi = \{\phi_1,\ldots,\phi_{d-1}\}$. Hence, the pairs $(F,\Phi)$ and $(G,\Psi)$ have the same equilibrium measures. More generally, one could consider any cohomology relation between two or more functions in~$\Phi$ or even more cohomology relations in order to reduce further the dimension of the problem.
\end{example}

\begin{example}[Reduction to the classical case via cohomology]
Let $T$ be a continuous map on a compact metric space and let $\Phi = \{\phi_1, \phi_2\}$ be a pair of continuous functions such that $(T,\Phi)$ has an abundance of ergodic measures. Moreover, assume that $\phi_1$ is cohomologous to~$\phi_2$ and consider the function $F\colon \R^2 \to \R$ given by $F(z_1,z_2) = (z_1^3+ z_2^3)^{1/3}$. This implies that $\int_X\phi_1 \,d\mu = \int_X\phi_2 \,d\mu$ for every $\mu \in \cM$ and so
\[
\begin{split}
h_\mu(T) + F\biggl(\int_X\Phi \,d\mu\biggr) &= h_\mu(T) + F\biggl(\int_X\phi_1 \,d\mu,\int_X\phi_1 \,d\mu\biggr) \\
&= h_\mu(T) + \int_X2^{1/3}\phi_1\, d\mu
\end{split}
\]
for every $\mu \in \cM$.
Letting $\psi= 2^{1/3}\phi_1$, it follows from the definitions that $P_F(\Phi) = P(\psi)$, where $P$ denotes the classical topological pressure. Hence, $\nu$~is an equilibrium measure for $(F,\Phi)$ if and only if $\nu$~is an equilibrium measure for~$\psi$.

Recall that a continuous function $\phi\colon X \to \R$ is said to have the \emph{Bowen property} if for every $K > 0$ there exists $\epsilon > 0$ such that whenever
\[
d(T^k(x),T^k(y)) < \epsilon\quad\text{for} \ k = 0,1,\ldots,n-1
\]
we have $|S_n\phi(x) - S_n\phi(y)| \le K$.
If $T\colon X \to X$ is an expansive map with specification and $\phi_1$ (or~$\phi_2$) is a continuous function with the Bowen property, then there exists a unique equilibrium measure $\mu_\psi$ for~$\psi$ (see~\cite{Bow75}). Therefore, $\mu_\psi$ is also the unique equilibrium measure for $(F,\Phi)$.
\end{example}

We observe that this example can be easily generalized to the case when
\[
F(z_1,\ldots,z_d)^n = H_{n}(z_1,\ldots,z_d),
\]
where $H_n$ is a homogeneous polynomial of degree~$n$, together with some cohomology relations between the functions in~$\Phi$.

\subsection{Characterization of equilibrium measures}

Now we consider the problem of characterizing the equilibrium measures.
Given a pair $(T,\Phi)$, we consider the set
\[
L(\Phi)=\biggl\{\int_X\Phi \,d\mu :\mu \in \cM\biggr\}.
\]
Since the map $\mu \mapsto \int_X\psi \,d\mu$ is continuous for each continuous function $\psi\colon X\to\R$ and $\cM$ is compact and connected, the set $L(\Phi)$ is a compact and connected subset of~$\R^d$.
For each $z \in \R^d$, we also consider the level sets
\[
\cM(z)= \biggl\{\mu \in \cM: \int_X\Phi \,d\mu = z\biggr\}
\]
and
\begin{equation}\label{Cz}
C_z(\Phi)= \biggl\{x \in X: \lim_{n \to \infty}\frac{S_n\Phi(x)}{n} = z \biggr\}.
\end{equation}
Following closely~\cite{BL20}, we say that the pair $(T,\Phi)$ is $C^r$-\emph{regular} (for some $2 \le r \le \omega$, where $\omega$~refers to the analytic case) if the following holds:
\begin{enumerate}
\item
each function in $\Span\{\phi_1,\ldots,\phi_d,1 \}$ has a unique equilibrium measure with respect to the classical topological pressure;
\item
for each $z \in \interior L(\Phi)$ the map $q \mapsto P(\langle q, \Phi - z\rangle)$ is of class $C^r$, is~strictly convex, and its second derivative is a positive definite bilinear form for each $q \in \R^d$, where $\langle\cdot, \cdot \rangle$ is the usual inner product;
\item
the entropy map $\mu \mapsto h_\mu(T)$ is upper semicontinuous.
\end{enumerate}
Examples of $C^r$-regular pairs $(T,\Phi)$ include topologically mixing subshifts of finite type, $C^{1+\epsilon}$ expanding maps, and $C^{1+\epsilon}$ diffeomorphisms with a locally maximal hyperbolic set, with $\Phi$ composed of H\"older continuous functions.
Finally, we say that the family of functions $\Phi = \{\phi_1,\ldots,\phi_d\}$ is \emph{cohomologous} to a constant $c = (c_1,\ldots,c_d)$ if $\phi_{i}$ is cohomologous $c_i$ for $i=1,\ldots,d$. Then $L(\Phi) = \{c\}$ and so $\interior L(\Phi) = \emptyset$.

The following theorem is our main result. Given a function $F\colon \R^d \to \R$, we consider the set
\[
K(F,\Phi)= \biggl\{\int_X\Phi \,d\mu: \mu \text{ is an equilibrium measure for } (F,\Phi)\biggr\} \subset L(\Phi).
\]
We also consider the function $h\colon L(\Phi) \to \R$ defined by
\begin{equation}\label{hh}
h(z)= \sup\bigl\{h_\mu(T): \mu \in \cM(z)\bigr\}.
\end{equation}

\begin{theorem}\label{THM}
Let $T\colon X \to X$ be a continuous map on a compact metric space and let $\Phi=\{\phi_1,\ldots, \phi_d\}$ be a family of continuous functions such that the pair $(T,\Phi)$ is $C^r$ regular for some $r \ge 2$. For each function $F\colon \R^d \to \R$ of class $C^r$, the following properties hold:
\begin{enumerate}
\item
$K(F,\Phi)$ is a nonempty compact set;
\item
$K(F,\Phi)$ is the set of maximizers of the function $z \mapsto h(z) + F(z)$;
\item
if $K(F,\Phi) \subset \interior L(\Phi)$, then the equilibrium measures for $(F,\Phi)$ are the elements of $\{\nu_{z}: z \in K(F,\Phi)\}$, where each $\nu_z\in\cM$ is an ergodic measure that is the unique equilibrium measure for some function $\psi_z \in \Span\{\phi_1,\ldots,\phi_d,1\}$.
\end{enumerate}
\end{theorem}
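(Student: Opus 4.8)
The plan is to reduce the whole problem to the study of the real-valued function $z \mapsto h(z)+F(z)$ on the compact set $L(\Phi)$, and then to analyze its maximizers lying in the interior by means of the multifractal results of~\cite{BSS02}.

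\emph{The master identity.} First I would regroup the variational principle \eqref{vv3} according to the barycenter map $\mu \mapsto \int_X\Phi\,d\mu$. Writing $z = \int_X\Phi\,d\mu \in L(\Phi)$ and splitting the supremum over $\cM$ into an outer supremum over $z$ and an inner one over $\cM(z)$, one obtains
\[
P_F(\Phi) = \sup_{z\in L(\Phi)}\Bigl\{F(z) + \sup_{\mu\in\cM(z)}h_\mu(T)\Bigr\} = \sup_{z\in L(\Phi)}\bigl\{h(z)+F(z)\bigr\}.
\]
Since the entropy map is upper semicontinuous (part~(3) of $C^r$-regularity) and each $\cM(z)$ is a closed, hence compact, subset of $\cM$, the supremum defining $h(z)$ is attained; a short argument using convergence of the integrals $\int_X\Phi\,d\mu$ then shows that $h$ is upper semicontinuous on the compact set $L(\Phi)$, so the outer supremum is in fact a maximum.

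\emph{Parts~(1) and~(2).} If $\nu$ is an equilibrium measure and $z=\int_X\Phi\,d\nu$, then $h_\nu(T)+F(z)=P_F(\Phi)\ge h(z)+F(z)\ge h_\nu(T)+F(z)$, which forces $h_\nu(T)=h(z)$ and shows that $z$ maximizes $h+F$, so $z\in K(F,\Phi)$. Conversely, if $z$ maximizes $h+F$, then any $\mu\in\cM(z)$ attaining $h(z)$ satisfies $h_\mu(T)+F(z)=h(z)+F(z)=P_F(\Phi)$ and is thus an equilibrium measure with barycenter $z$. Hence $K(F,\Phi)$ is precisely the set of maximizers of $h+F$, which is~(2). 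As $h+F$ is upper semicontinuous on the compact set $L(\Phi)$, this maximizer set is nonempty and closed, hence compact, which is~(1).

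\emph{Part~(3).} Fix $z\in K(F,\Phi)\subset\interior L(\Phi)$. Here I would invoke the conditional variational principle of~\cite{BSS02}: under $C^r$-regularity one has, for interior $z$, the Legendre description
\[
h(z) = \inf_{q\in\R^d}\bigl\{P(\langle q,\Phi\rangle) - \langle q,z\rangle\bigr\}.
\]
The map $q\mapsto P(\langle q,\Phi\rangle)-\langle q,z\rangle$ is strictly convex with positive definite Hessian (part~(2) of $C^r$-regularity), so it has a unique minimizer $q(z)$, characterized by $\int_X\Phi\,d\nu_z=z$, where $\nu_z$ is the unique classical equilibrium measure of $\psi_z:=\langle q(z),\Phi\rangle\in\Span\{\phi_1,\ldots,\phi_d,1\}$ (part~(1)). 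Differentiability of the pressure gives $\int_X\Phi\,d\nu_z=z$, and the equilibrium condition gives $h_{\nu_z}(T)=P(\langle q(z),\Phi\rangle)-\langle q(z),z\rangle=h(z)$, so $\nu_z$ realizes $h(z)$ and is an equilibrium measure for $(F,\Phi)$. For uniqueness, let $\mu$ be any equilibrium measure with $\int_X\Phi\,d\mu=z$; then $h_\mu(T)=h(z)$, and inserting $\int_X\Phi\,d\mu=z$ into the Legendre identity yields
\[
h_\mu(T) + \int_X\langle q(z),\Phi\rangle\,d\mu = P(\langle q(z),\Phi\rangle),
\]
so $\mu$ is a classical equilibrium measure for $\psi_z$ and therefore $\mu=\nu_z$ by part~(1). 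Being the unique equilibrium measure for $\psi_z$, the measure $\nu_z$ is an extreme point of the convex set of such measures, hence ergodic. Since by~(2) every equilibrium measure for $(F,\Phi)$ has barycenter in $K(F,\Phi)\subset\interior L(\Phi)$, this identifies the equilibrium measures as exactly $\{\nu_z:z\in K(F,\Phi)\}$.

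\emph{Main obstacle.} The regrouping and the semicontinuity bookkeeping in the first two paragraphs are routine; the substantive work is Part~(3). There I must import from~\cite{BSS02} the conditional variational principle and the Legendre-transform formula for $h$ on $\interior L(\Phi)$, together with the existence of the interior minimizer $q(z)$, and I must check that the hypotheses of those multifractal results are exactly what $C^r$-regularity encodes. The delicate point is thus the interface between the multifractal analysis of the level sets $C_z(\Phi)$ and the thermodynamic formalism, in particular guaranteeing that the entropy-maximizing measure over $\cM(z)$ is the single ergodic measure $\nu_z$ for interior $z$.
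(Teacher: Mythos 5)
Your proposal is correct and follows essentially the same route as the paper: parts (1)–(2) come from regrouping the variational principle by the barycenter $z=\int_X\Phi\,d\mu$ and using upper semicontinuity of the entropy map, and part (3) rests on Theorem~8 of~\cite{BSS02} to produce, for interior $z$, the unique classical equilibrium measure $\nu_z$ of a potential in $\Span\{\phi_1,\ldots,\phi_d,1\}$ and to show any entropy maximizer in $\cM(z)$ must coincide with it. The only cosmetic difference is that you obtain compactness of $K(F,\Phi)$ from the upper semicontinuity of $h+F$ on $L(\Phi)$, whereas the paper argues directly with weak$^*$ limits of equilibrium measures.
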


\begin{proof}
We divide the proof into steps.

\begin{lemma}\label{KFP}
$K(F,\Phi)$ is a nonempty compact subset of $L(\Phi)$.
\end{lemma}

\begin{proof}[Proof of the lemma]
Let $(z_n)_{n\in\N}$ be a sequence in $K(F,\Phi)$ converging to a point $z \in L(\Phi)$. For each $n \in\N$ there exists an equilibrium measure $\mu_n \in \cM$ for $(F,\Phi)$ such that $z_n= \int_X\Phi \,d\mu_n$. Passing eventually to a subsequence, we may assume that there exists $\mu \in \cM$ such that $\mu_n \to \mu$ when $n\to\infty$ in the weak$^*$ topology. Since the map $\mu \mapsto h_\mu(T)$ is upper semicontinuous, we obtain
\[
P_F(\Phi) = \limsup_{n \to \infty}\bigg[h_{\mu_n}(T) + F\biggl(\int_X\Phi \,d\mu_n\biggr)\bigg] \le h_\mu(T) + F\biggl(\int_X\Phi \,d\mu\biggr),
\]
which implies that $\mu$ is an equilibrium measure for $(F,\Phi)$. Since $z = \int_X\Phi \,d\mu$, we conclude that $z \in K(F,\Phi)$. Hence, $K(F,\Phi)$ is a closed and bounded subset of~$\R^d$. Theorem~\ref{EXI} guarantees that $K(F,\Phi)$ is nonempty.
\end{proof}

\begin{lemma}\label{BSS}
For each $z \in \interior L(\Phi)$ there exists an ergodic measure $\nu_{z} \in \cM$ such that $\int_X\Phi \,d\nu_z = z$. In fact, $\nu_z$ is the unique equilibrium measure for some function $\psi_z \in \Span\{\phi_1,\ldots,\phi_d,1\}$.
\end{lemma}

\begin{proof}[Proof of the lemma]
By the proof of Theorem~8 in~\cite{BSS02}, for each $z \in \interior L(\Phi)$ there exists $q(z) \in \R^d$ and an ergodic measure $\nu_z \in \cM(z)$ such that $\nu_z$ is an equilibrium measure for the function
\[
\psi_z = \langle q(z), \Phi - z\rangle - h(T|_{C_z(\Phi)}),
\]
where $h$ denotes the topological entropy. Since $\psi_z \in \Span\{\phi_1,\ldots,\phi_{d},1\}$, we conclude that $\nu_z$ is the unique equilibrium measure for $\psi_z$.
\end{proof}

\begin{lemma}\label{FZ4}
For each $z \in L(\Phi)$ there exists $\mu \in \cM(z)$ with $h(z) = h_\mu (T)$. Moreover, when $z \in \interior L(\Phi)$ this measure is unique and coincides with~$\nu_z$.
\end{lemma}

\begin{proof}[Proof of the lemma]
Take $z \in L(\Phi)$. By the definition of $L(\Phi)$, there exists $\mu \in \cM$ such that $\int_X\Phi \, d\mu = z$, that is, $\cM(z) \ne \emptyset$.
By the compactness of $\cM(z)$ and the upper semicontinuity of the map $\mu \mapsto h_\mu(T)$, there exists $\mu \in \cM(z)$ maximizing the metric entropy.

Now take $z \in \interior L(\Phi)$. By Lemma~\ref{BSS}, there exists $\nu_z \in \cM$ such that $\int_X\Phi \,d\nu_z = z$, where $\nu_z$ is the unique equilibrium for~$\psi_z$. Let $\mu \in \cM(z)$ be a measure maximizing the metric entropy. Since $\int_X\Phi \,d\mu = \int_X\Phi \,d\nu_z$, one can verify that $\int_X\psi_z \,d\mu = \int_X\psi_z \,d\nu_z$. Then
\[
h_\mu(T) + \int_X\psi_z \,d\mu \ge h_{\nu_z}(T) + \int_X\psi_z \,d\nu_z = P(\psi_z),
\]	
which implies that $\mu$ is also an equilibrium measure for~$\psi_z$ (for the classical topological pressure). Since $\psi_z$ has a unique equilibrium measure, we conclude that $\mu = \nu_z$.
\end{proof}

\begin{lemma}\label{EQU}
$z \in K(F,\Phi)$ if and only if $z$ maximizes the function $E$ defined by $E(z)= h(z) + F(z)$.
\end{lemma}

\begin{proof}[Proof of the lemma]
First assume that $z \in L(\Phi)$ maximizes the function~$E$. By Lemma~\ref{FZ4}, there exists $\mu \in \cM(z)$ such that $h(z) = h_\mu(T)$ and so
\[
h_\mu(T) + F\biggl(\int_X\Phi \,d\mu\biggr) = h(z) + F(z) = \sup_{\mu \in \cM}\biggl\{h_\mu(T) + F\biggl(\int_X\Phi \,d\mu\biggr)\biggr\}.
\]
This implies that $\mu$ is an equilibrium measure for $(F,\Phi)$ and so $z \in K(F,\Phi)$.
	
Now assume that $z \in K(F,\Phi)$. Then there exists an equilibrium measure~$\mu$ for $(F,\Phi)$ such that $z = \int_X\Phi \,d\mu$ and so
\[
\begin{split}
E(z)
&= h(z) + F(z) \\
&\ge h_\mu(T) + F\biggl(\int_X\Phi \,d\mu\biggr)\\
& = \sup_{\mu \in \cM}\biggl\{h_\mu(T) + F\biggl(\int_X\Phi \,d\mu\biggr)\biggr\}.
\end{split}
\]
This shows that $z$ maximizes~$E$.
\end{proof}

Lemmas~\ref{KFP} and~\ref{EQU} give items (1) and~(2) in the theorem. Now we establish item~(3). For each $z \in K(F,\Phi)$ there exists an equilibrium measure $\mu$ for $(F,\Phi)$ such that $\int_X\Phi \,d\mu = z$. When $K(F,\Phi) \subset \interior L(\Phi)$, it follows from Lemmas~\ref{BSS} and~\ref{FZ4} that $\mu$ is the unique measure with $\int_X\Phi \,d\mu = z$ and that $\mu = \nu_z$, where $\nu_z$ is ergodic and is the unique equilibrium measure for some function~$\psi_z$.
\end{proof}

\section{Number of equilibrium measures}

In this section we consider the problem of how many equilibrium measures a $C^r$ regular system has.

\subsection{Preliminary results}

We start with some auxiliary results about the function $h$ in~\eqref{hh}. Note that
\[
h(z)=\sup\left\{h_\mu(T):\int_X\Phi \,d\mu = z\text{ with $\mu \in \cM$}\right\}.
\]

\begin{proposition}\label{HLP}
Let $(T, \Phi)$ be a $C^r$ regular pair for some $r \ge 2$. Then the function $h\colon L(\Phi) \to \R$ is upper semicontinuous, concave and finite.
\end{proposition}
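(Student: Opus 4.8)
The plan is to establish the three asserted properties of $h$ separately, since each relies on a different structural feature of the $C^r$-regular pair. The finiteness is immediate: for every $\mu \in \cM$ we have $h_\mu(T) \le h_{\mathrm{top}}(T) < \infty$ because $X$ is compact and $T$ is continuous with finite topological entropy (or, failing that, finiteness follows from the variational principle combined with the $C^r$-regularity assumptions), so the supremum defining $h(z)$ is bounded above for each $z \in L(\Phi)$. For the upper semicontinuity, I would argue directly from the definition \eqref{hh}. Let $z_n \to z$ in $L(\Phi)$ with $h(z_n) \to \ell$; for each $n$ pick $\mu_n \in \cM(z_n)$ nearly attaining the supremum, so that $h_{\mu_n}(T) \ge h(z_n) - 1/n$. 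By compactness of $\cM$ in the weak$^*$ topology, pass to a subsequence with $\mu_n \to \mu$. The continuity of each map $\nu \mapsto \int_X \phi_i \, d\nu$ forces $\int_X \Phi \, d\mu = z$, so $\mu \in \cM(z)$; and the upper semicontinuity of the entropy map (condition (3) in the definition of $C^r$-regularity) gives $h_\mu(T) \ge \limsup_n h_{\mu_n}(T) = \ell$. Hence $h(z) \ge h_\mu(T) \ge \ell = \limsup_n h(z_n)$, which is exactly upper semicontinuity.

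For the concavity, the natural approach is to exploit the affine structure of the constraint sets together with the concavity of the entropy map. Given $z_0, z_1 \in L(\Phi)$ and $t \in (0,1)$, set $z_t = (1-t)z_0 + t z_1$. Choose $\mu_0 \in \cM(z_0)$ and $\mu_1 \in \cM(z_1)$ that attain (or nearly attain) the respective suprema, which is legitimate by the argument above. The measure $\mu_t = (1-t)\mu_0 + t\mu_1$ is again $T$-invariant, and by linearity of the integral $\int_X \Phi \, d\mu_t = (1-t)z_0 + t z_1 = z_t$, so $\mu_t \in \cM(z_t)$. Since the entropy map $\mu \mapsto h_\mu(T)$ is affine (indeed $h_{\mu_t}(T) = (1-t)h_{\mu_0}(T) + t h_{\mu_1}(T)$ for the convex combination of invariant measures), we obtain
\[
h(z_t) \ge h_{\mu_t}(T) = (1-t)h_{\mu_0}(T) + t h_{\mu_1}(T) = (1-t)h(z_0) + t h(z_1),
\]
which is the desired concavity inequality. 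One should also check that $z_t \in L(\Phi)$, but this is automatic since $L(\Phi)$ is convex (being the set of barycenters $\int_X \Phi \, d\mu$ over the convex set $\cM$), or at minimum it follows from the linearity just used.

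The step I expect to require the most care is the invocation of affinity of the entropy map on convex combinations. While $\mu \mapsto h_\mu(T)$ is affine in the sense that $h_{(1-t)\mu_0 + t\mu_1}(T) = (1-t)h_{\mu_0}(T) + t h_{\mu_1}(T)$ for invariant measures, this identity must be applied in a setting where the entropies are finite—which is guaranteed by the finiteness already established—and one must ensure the convex combination $\mu_t$ genuinely lies in $\cM$ and satisfies the moment constraint exactly. The upper semicontinuity step is routine once the weak$^*$ compactness of $\cM$ and condition (3) are in hand, so the genuine content of the proposition is the interplay between the linear moment map $\Phi$ and the affine entropy, which together convert the defining supremum into a concave function. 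No single step is a serious obstacle here; the result is essentially a packaging of standard convexity properties of entropy, and the proof should be short.
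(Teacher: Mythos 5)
Your proof is correct and follows essentially the same route as the paper: upper semicontinuity via weak$^*$ compactness of $\cM$ and condition (3) of $C^r$-regularity, and concavity via the affinity of the entropy map applied to convex combinations of (near-)maximizing measures. The only cosmetic differences are that you use $\epsilon$-optimal measures where the paper first proves the supremum in \eqref{hh} is attained (its Lemma~\ref{FZ4}), and that the paper deduces finiteness from upper semicontinuity of $h$ on the compact set $L(\Phi)$ rather than from a bound by the topological entropy; both variants are sound.
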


\begin{proof}
Take $z \in L(\Phi)$ and consider a sequence $(z_n)_{n \in\N}$ in $L(\Phi)$ such that $z_n \to z$ when $n\to\infty$. By Lemma~\ref{FZ4}, eventually passing to a subsequence one can assume that for each $n \in\N$ there exists $\mu_n \in \cM(z_n)$ such that $h(z_n) = h_{\mu_n}(T)$ and $\mu_n \to \mu$ when $n\to\infty$ for some $\mu \in \cM$ in the weak$^*$ topology. We also have
\[
\int_X\Phi \,d\mu = \lim_{n \to \infty}\int_X\Phi \,d\mu_{n} = \lim_{n \to \infty}z_n = z
\]
and so $\mu \in \cM(z)$.
Moreover, since $\mu \mapsto h_\mu(T)$ is upper semicontinuous, we obtain
\[
\limsup_{n \to \infty}h(z_n) = \limsup_{n \to \infty}h_{\mu_n}(T) \le h_\mu(T) \le h(z)
\]	
and so $h$ is upper semicontinuous on~$L(\Phi)$.

Now we prove the concavity property. Take $z_1, z_2 \in L(\Phi)$ and $\mu_1 \in \cM(z_1)$, $\mu_2 \in \cM(z_2)$ such that $h(z_1) = h_{\mu_1}(T)$ and $h(z_2) = h_{\mu_2}(T)$. Since the entropy map is affine, for each $t \in [0,1]$ we have
\[
\begin{split}
h(tz_1 + (1-t)z_2) \ge h_{t\mu_1 + (1-t)\mu_2}(T) &= th_{\mu_1}(T) + (1-t)h_{\mu_2}(T)\\
& = th(z_1) + (1-t)h(z_2).
\end{split}
\]
The upper semicontinuity of $h$ on $L(\Phi)$ together with the compactness of $L(\Phi)$ and the fact that $\cM(z) \ne \emptyset$ for each $z \in L(\Phi)$, guarantee that $h$ is finite on $L(\Phi)$.
\end{proof}

As pointed out in the recent work~\cite{Wol20}, in strong contrast to what happens for $d=1$, the function $z \mapsto h(z)$ need not be continuous on~$L(\Phi)$.

\begin{proposition}\label{CRW}
If the pair $(T,\Phi)$ is $C^r$ regular, then the function $h|_{\interior L(\Phi)}$ is $C^{r-1}$. Moreover, if $(T,\Phi)$ is $C^{\omega}$ regular, then $h|_{\interior L(\Phi)}$ is analytic.
\end{proposition}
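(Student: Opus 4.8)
The plan is to reduce the regularity of $h$ to that of the classical pressure function via Legendre duality. Write $P(q)=P(\langle q,\Phi\rangle)$ for $q\in\R^d$, where $P$ on the right denotes the classical topological pressure. Splitting the supremum in the variational principle according to the value of $\int_X\Phi\,d\mu$ and using that $\int_X\langle q,\Phi\rangle\,d\mu=\langle q,z\rangle$ whenever $\mu\in\cM(z)$, I first obtain
\[
P(q)=\sup_{z\in L(\Phi)}\bigl(h(z)+\langle q,z\rangle\bigr),
\]
so that $-h$ is the convex conjugate of $P$. Since $h$ is concave and upper semicontinuous by Proposition~\ref{HLP}, the Fenchel--Moreau theorem yields the dual representation
\[
h(z)=\inf_{q\in\R^d}\bigl(P(q)-\langle q,z\rangle\bigr)
\]
for every $z\in\interior L(\Phi)$. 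Note that $P(q)-\langle q,z\rangle=P(\langle q,\Phi-z\rangle)$, so condition~(ii) in the definition of $C^r$-regularity applies directly to the objective of this infimum.

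Next I would analyze the gradient map $q\mapsto\nabla P(q)$. By condition~(ii) the function $P$ is of class $C^r$, strictly convex, and has everywhere positive definite Hessian $D^2P$. The standard differentiability of the pressure gives $\nabla P(q)=\int_X\Phi\,d\mu_q$, where $\mu_q$ is the unique equilibrium measure of $\langle q,\Phi\rangle$ furnished by condition~(i); combined with Lemma~\ref{BSS} this shows that $\nabla P$ is a bijection from $\R^d$ onto $\interior L(\Phi)$. Since $D^2P$ is invertible, the inverse function theorem shows that $\nabla P$ is a $C^{r-1}$ diffeomorphism, and hence that its inverse $z\mapsto q(z):=(\nabla P)^{-1}(z)$ is of class $C^{r-1}$ on $\interior L(\Phi)$.

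Finally, for $z\in\interior L(\Phi)$ the first-order condition for the infimum above is $\nabla P(q)=z$, which by strict convexity has the unique solution $q=q(z)$, so
\[
h(z)=P(q(z))-\langle q(z),z\rangle.
\]
Because $P$ is $C^r$ and $q(\cdot)$ is $C^{r-1}$, both $z\mapsto P(q(z))$ and $z\mapsto\langle q(z),z\rangle$ are of class $C^{r-1}$, and therefore $h|_{\interior L(\Phi)}$ is $C^{r-1}$. In the analytic case one argues identically, replacing the smooth inverse function theorem by its analytic counterpart: $\nabla P$ is analytic with invertible derivative, so $q(\cdot)$ is analytic and hence so is $h$. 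The main obstacle is the passage from the pressure to the dual variable, namely verifying that $\nabla P$ maps onto $\interior L(\Phi)$ and that the minimizer $q(z)$ is attained in the interior and depends smoothly on $z$; once the inverse function theorem applies, the one-derivative loss producing $C^{r-1}$ from the $C^r$ pressure is exactly the cost of inverting $\nabla P$.
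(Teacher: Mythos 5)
Your argument is correct, but it takes a genuinely different route from the paper. The paper's proof is essentially a citation: it invokes Theorem~12 of \cite{BSS02}, which asserts that $z \mapsto h(T|_{C_z(\Phi)})$ is $C^{r-1}$ (respectively analytic) on $\interior L(\Phi)$ for $C^r$ (respectively $C^\omega$) regular pairs, and then identifies $h(z)$ with $h(T|_{C_z(\Phi)})$ via the conditional variational principle. You instead give a self-contained derivation: the classical variational principle yields $P(q)=\sup_{z\in L(\Phi)}(h(z)+\langle q,z\rangle)$, Proposition~\ref{HLP} makes $-h$ (extended by $+\infty$ off the compact set $L(\Phi)$) proper, convex and lower semicontinuous so that Fenchel--Moreau gives $h(z)=\inf_q(P(q)-\langle q,z\rangle)$, and condition~(ii) of $C^r$-regularity supplies a $C^r$ pressure with everywhere positive definite Hessian, so the inverse function theorem makes $z\mapsto q(z)$ of class $C^{r-1}$ and hence $h(z)=P(q(z))-\langle q(z),z\rangle$ of class $C^{r-1}$ (analytic in the $C^\omega$ case). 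The one step where you still lean on the same external input as the paper is the surjectivity of $\nabla P$ onto $\interior L(\Phi)$ (equivalently, attainment of the infimum at an interior critical point), which you resolve via Lemma~\ref{BSS}, itself extracted from the proof of Theorem~8 in \cite{BSS02}; alternatively this follows from the coercivity of $q\mapsto P(q)-\langle q,z\rangle$ for $z$ in the interior of the domain of the conjugate, which would make the argument fully independent of \cite{BSS02}. What each approach buys: the paper's proof is two lines and reuses existing multifractal machinery; yours makes explicit where each regularity hypothesis enters, and as a by-product identifies $\nabla h(z)=-q(z)$ (which in fact shows $h$ is one degree smoother than claimed on $\interior L(\Phi)$). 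Since Theorem~12 of \cite{BSS02} is itself proved by a Legendre-transform argument of this kind, your proof can fairly be described as unfolding the citation rather than contradicting it.
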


\begin{proof}
It follows from Theorem~12 in~\cite{BSS02} that if $(T,\Phi)$ is $C^r$ regular, then the map $\interior L(\Phi) \ni z \mapsto h(T|_{C_z(\Phi)})$ is of class $C^{r-1}$, and that if the pair is $C^{\omega}$ regular, then this map is analytic. Since $h(z) = h(T|_{C_z(\Phi)})$ for every $z \in \interior L(\Phi)$, we obtain the desired statement.
\end{proof}

For $d=1$, Corollary~1.11 in~\cite{BL20} says that if the pair $(T,\Phi)$ is $C^\omega$ and $F$~is analytic on $\interior L(\Phi)$, then the set $K(F,\Phi)$ is finite. In particular, there exist finitely many equilibrium measures. The next example shows that this may not hold for $d>1$.

\begin{example}[Infinite number of equilibrium measures]\label{exa5}
Let $(T,\Phi)$ be a two-dimensional $C^{\omega}$ regular pair. Proposition~\ref{CRW} says that the function $h|_{\interior L(\Phi)}$ is analytic. Now we consider the function $F\colon \interior L(\Phi) \to \R$ given by
\[
F(z_1,z_2) = - h(z_1,z_2) - z_1^2 z_2^2.
\]
Since $h$ is analytic on $\interior L(\Phi)$, the same happens to~$F$. Notice that
\[
E(z_1,z_2) = h(z_1,z_2) + F(z_1,z_2) = - z_1^2 z_2^2.
\]
One can easily verify that the set of critical points of $E$ in $\interior L(\Phi)$ is
\[
\{(x,0): x \in \R\} \cup \{(0,y): y \in \R\} \cap \interior L(\Phi).
\]
In fact, all critical points maximize $E$ on~$\interior L(\Phi)$. Now assume that $(0,0)$ belongs to~$\interior L(\Phi)$. Then any open neighborhood of $(0,0)$ in $\interior L(\Phi)$ contains an uncountable number of critical points. In particular, by Theorem~\ref{THM}, there exist infinitely many equilibrium measures for~$(F, \Phi)$.
\end{example}

Example~\ref{exa5} also shows that one can have $K(F,\Phi) \cap \partial L(\Phi) \ne \emptyset$. In fact, if $(0,0) \in \interior L(\Phi)$, then $\partial L(\Phi)$ intersects the two axes at least four times. This justifies requiring that $K(F,\Phi) \subset \interior L(\Phi)$ in the last item of Theorem~\ref{THM}.

\subsection{Equilibrium measures I}

For $d=1$, it was shown in~\cite{BL20} that no point on $\partial L(\Phi)$ maximizes the function $E = h + F$. By Lemma~\ref{EQU}, this implies that $K(F,\Phi) \subset \interior L(\Phi)$. It is also shown that $h''(z) < 0$ for every $z \in \interior L(\Phi)$ and so $h\colon L(\Phi) \to \R$ is a strictly concave function. Note that for $d = 1$ we have $L(\phi) = [A, B]$, where $A = \inf_{\mu \in \cM}\int \phi \,d\mu$ and $B = \sup_{\mu \in \cM}\int \phi \,d\mu$.

The next result is a criterion for uniqueness of equilibrium measures.

\begin{theorem}\label{UD1}
Let $(T,\phi)$ be a $C^r$ regular pair and let $F\colon [A,B] \to \R$ be a concave $C^r$ function. Then there exists a unique equilibrium measure for $(F,\phi)$. Moreover, the equilibrium measure is ergodic.
\end{theorem}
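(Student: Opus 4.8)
The plan is to combine the strict concavity of the function $E = h + F$ with the characterization of equilibrium measures already obtained in Theorem~\ref{THM}. First I would record the strict concavity. Since $(T,\phi)$ is $C^r$ regular with $d=1$, the facts recalled above (taken from~\cite{BL20}) give $h''(z) < 0$ for every $z \in \interior L(\phi) = (A,B)$, so that $h$ is strictly concave on the open interval $(A,B)$; together with the assumed concavity of $F$, this makes $E = h + F$ strictly concave there. A strictly concave function admits at most one maximizer on an interval: if $E$ attained its maximum at two distinct interior points $z_1 \ne z_2$, then strict concavity would force $E\bigl((z_1+z_2)/2\bigr) > \tfrac12\bigl(E(z_1)+E(z_2)\bigr)$, contradicting maximality.

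Next I would locate the maximizer in the interior. By Theorem~\ref{THM}(1) the set $K(F,\phi)$ is nonempty, and by Lemma~\ref{EQU} it coincides with the set of maximizers of $E$ on $L(\phi)=[A,B]$; hence $E$ attains its maximum. Invoking the fact established in~\cite{BL20} for $d=1$ (and recalled above) that no point of $\partial L(\phi)=\{A,B\}$ maximizes $E$, the maximum must be attained at an interior point. Combined with the uniqueness from the previous step, this yields a single maximizer $z^* \in \interior L(\phi)$, so that $K(F,\phi) = \{z^*\} \subset \interior L(\phi)$.

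Finally I would apply Theorem~\ref{THM}(3). Since $K(F,\phi) \subset \interior L(\phi)$, the equilibrium measures for $(F,\phi)$ are exactly the elements of $\{\nu_z : z \in K(F,\phi)\} = \{\nu_{z^*}\}$, where $\nu_{z^*}$ is ergodic and is the unique equilibrium measure for a function $\psi_{z^*} \in \Span\{\phi,1\}$. This delivers both the uniqueness and the ergodicity asserted in the theorem.

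I expect the proof itself to be essentially an assembly of results already in place, so there is no computational obstacle. The one conceptual point that genuinely carries the argument is the passage from strict concavity on the \emph{open} interval to a unique \emph{global} maximizer, which is legitimate only because the boundary points cannot be maximizers. It is precisely these two one-dimensional inputs from~\cite{BL20}, namely $h'' < 0$ on $(A,B)$ and the absence of boundary maximizers, that fail when $d>1$, as Example~\ref{exa5} illustrates; thus the crux is recognizing that the $d=1$ structure is exactly what forces $K(F,\phi)$ to be a single interior point.
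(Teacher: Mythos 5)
Your proposal is correct and follows essentially the same route as the paper's proof: strict concavity of $E=h+F$ (from $h''<0$ on $(A,B)$ plus concavity of $F$), exclusion of boundary maximizers, hence $K(F,\phi)=\{z^*\}\subset\interior L(\phi)$, and then the identification of the unique ergodic equilibrium measure $\nu_{z^*}$. The only cosmetic difference is that you conclude via Theorem~\ref{THM}(3), whereas the paper invokes Theorem~4.3 of \cite{BL20} together with Lemma~\ref{BSS}; the content is the same.
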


\begin{proof}
Since $F$ is concave and $h$ is strictly concave, the function $E = h + F$ is strictly concave. This implies that $E$ has at most one maximizer in $(A,B)$. Since there is no maximizer of~$E$ on $\partial L(\phi) = \{A,B\}$ and $K(F,\phi) \ne \emptyset$, we conclude that there exists a unique point $z^* \in (A,B)$ maximizing $E$. Hence, it follows from Lemma~\ref{EQU} that $K(F,\phi) = \{z^*\}$. By Theorem~4.3 in~\cite{BL20} together with Lemma~\ref{BSS}, we conclude that there exists a unique equilibrium measure for $(F,\phi)$ and that this measure is ergodic.
\end{proof}

The following example illustrates various possibilities.

\begin{example}\label{exa6}
Let $\Sigma = \{-1,1\}$ and let $T\colon \Sigma^{\Z} \to \Sigma^{\Z}$ be the two-sided shift. We consider the function $\phi\colon \Sigma \to \R$ defined by $\phi(\cdots \omega_{-1} \omega_0 \omega_1 \cdots) = \omega_0$. Then $L(\phi) = [-1,1]$ and the entropy function $h\colon L(\phi) \to \R$ is given by
\begin{equation}\label{SPEC}
h(z) = -\frac{1-z}2\log \left(\frac{1-z}2\right) -\frac{1+z}2\log \left(\frac{1+z}2\right).
\end{equation}
For the function $F\colon L(\Phi) \to \R$ defined by $F(z) = \alpha/(z^2 - 2)$, where $\alpha \in \R$, we have
\[
F''(z) = 2\alpha(3z^2 + 2)/(z^2 - 2)^3.
\]
Notice that for $\alpha > 0$ we have $F''< 0$ on $\interior L(\phi)$. Since $F \equiv 0$ for $\alpha = 0$, the function $F$ is concave on $\interior L(\phi)$ whenever $\alpha \ge 0$. Hence, by Theorem~\ref{UD1} there exists a unique equilibrium measure $\nu_{z^*}$ for $(F, \phi)$, where $z^* = 0$ (see Figure~\ref{fig:S1}). For $\alpha < 0$, the number of equilibrium measures may vary. For instance, for $\alpha = -1$ there is one equilibrium measure, while for $\alpha = -1.7$ there are two equilibrium measures.
\end{example}

\begin{figure}[htbp]
\centering
\includegraphics[width=0.6\textwidth]{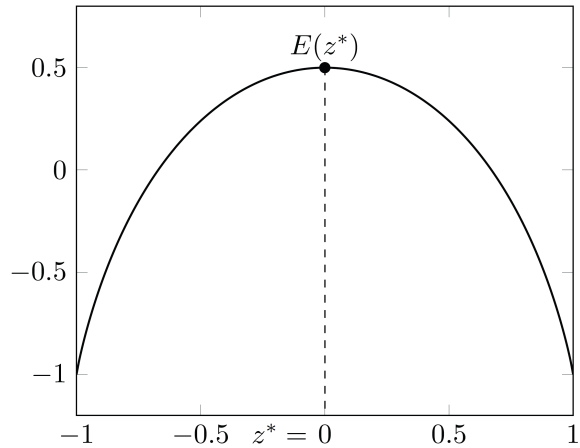}
\caption{Graph of $E$ for $\alpha = 1$.}\label{fig:S1}
\end{figure}


Theorem~\ref{UD1} also shows that in order to have finitely many equilibrium measures it is not necessary that the pair $(T, \phi)$ is $C^\omega$ and that the function $F$~is analytic. We give an example in the nonanalytic $C^{\infty}$ case.

\begin{example}
Consider the pair $(T,\phi)$ in Example~\ref{exa6} and let $F\colon \R \to \R$ be the function given by
\[
F(z) = \begin{cases} 3\exp(-1/z) & \text{if } z > 0, \\
0 & \text{if } z \le 0. \end{cases}
\]
One can show that $F$ is $C^\infty$ but not analytic. For $-1 \le z \le 0$, we~have
\[
E = h + F = h + 0 = h.
\]
It follows from \eqref{SPEC} that $E$ has a local maximum $y_1 = 1$ at $z_1^{*} = 0$. For $0 < z \le 1$, one can verify that $E$ has a local maximum $y_2 \approx 1.33$ at $z_2^{*} \approx 0.75$. Since $y_1 < y_2$, the function $E$ has a unique global maximum at $z_2^{*} \in (0, 1) \subset \interior L(\phi)$ (see Figure~\ref{fig:S2}). By Theorem~\ref{THM}, we conclude that $\nu_{z_2^*}$ is the unique equilibrium measure for $(F,\phi)$.
\end{example}

\begin{figure}[htbp]
\centering
\includegraphics[width=0.6\textwidth]{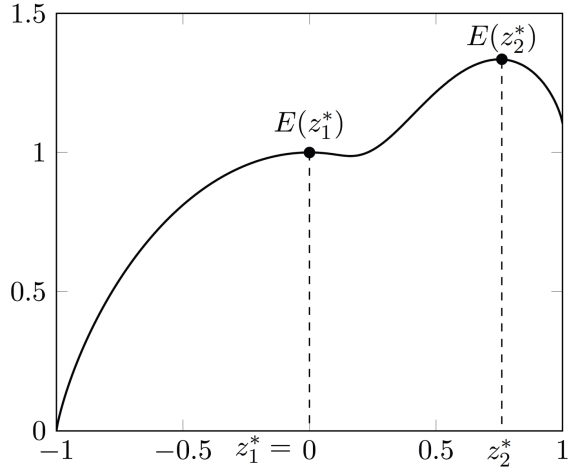}
\caption{Nonanalytic $C^{\infty}$ case}\label{fig:S2}
\end{figure}


\subsection{Equilibrium measures II}

The following statement is a version of the existence result in Theorem~\ref{UD1} for $d>1$.

\begin{theorem}\label{HDC}
Let $(T,\Phi)$ be a $C^r$ regular pair and let $F\colon L(\Phi) \to \R$ be a strictly concave $C^r$ function such that $E = h + F$ attains its maximum on $\interior L(\Phi)$. Then there exists a unique equilibrium measure for $(F,\Phi)$. Moreover, the equilibrium measure is ergodic.
\end{theorem}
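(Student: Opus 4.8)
I want to reduce Theorem~\ref{HDC} to Theorem~\ref{THM} by showing that the hypotheses force $K(F,\Phi)$ to be a single point contained in $\interior L(\Phi)$, and that the associated equilibrium measure is the ergodic measure $\nu_z$ provided by Lemma~\ref{BSS}. The two structural facts I rely on are that $h$ is concave on $L(\Phi)$ (Proposition~\ref{HLP}) and that $F$ is strictly concave by hypothesis; together these make $E = h + F$ strictly concave on the convex set $L(\Phi)$. (Here I am implicitly using that $L(\Phi)$ is convex, which follows from $\cM$ being convex and the map $\mu \mapsto \int_X \Phi \, d\mu$ being affine.)

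\textbf{Uniqueness of the maximizer.} A strictly concave function on a convex set has at most one maximizer. Since Theorem~\ref{THM}(1) guarantees $K(F,\Phi)$ is nonempty and Lemma~\ref{EQU} identifies $K(F,\Phi)$ with the set of maximizers of $E$, strict concavity forces $K(F,\Phi) = \{z^*\}$ for a single point $z^*$. By hypothesis $E$ attains its maximum on $\interior L(\Phi)$, so $z^* \in \interior L(\Phi)$, which gives $K(F,\Phi) \subset \interior L(\Phi)$.

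\textbf{Uniqueness and ergodicity of the measure.} Now I invoke Theorem~\ref{THM}(3): since $K(F,\Phi) = \{z^*\} \subset \interior L(\Phi)$, the equilibrium measures for $(F,\Phi)$ are exactly the elements of $\{\nu_z : z \in K(F,\Phi)\} = \{\nu_{z^*}\}$, where $\nu_{z^*}$ is the ergodic measure furnished by Lemma~\ref{BSS} as the unique classical equilibrium measure for $\psi_{z^*} \in \Span\{\phi_1,\ldots,\phi_d,1\}$. This yields a unique equilibrium measure that is ergodic, as claimed.

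\textbf{Main obstacle.} The delicate point is the strict concavity of $h$. For $d=1$ the text notes that $h'' < 0$ on the interior, so $h$ itself is strictly concave and one does not even need $F$ strictly concave; but for $d>1$ Proposition~\ref{HLP} only asserts that $h$ is concave, not strictly so, and indeed Example~\ref{exa5} shows $E = h + F$ can be flat along whole curves. This is precisely why the hypothesis here strengthens the requirement on $F$ to \emph{strict} concavity: the sum of a concave function and a strictly concave function is strictly concave, which is enough to rescue the uniqueness of the maximizer. I expect the only real care needed is to confirm that concave $+$ strictly concave is strictly concave on $L(\Phi)$ (immediate from the definitions) and that $L(\Phi)$ is genuinely convex so that strict concavity has teeth; both are routine but worth stating explicitly.
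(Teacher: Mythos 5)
Your proposal is correct and follows essentially the same route as the paper: concavity of $h$ (Proposition~\ref{HLP}) plus strict concavity of $F$ gives a strictly concave $E$ with a unique maximizer, the hypothesis places it in $\interior L(\Phi)$, and Theorem~\ref{THM} together with Lemma~\ref{BSS} yields the unique ergodic equilibrium measure. The only cosmetic difference is that you obtain existence of a maximizer from the nonemptiness of $K(F,\Phi)$ in Theorem~\ref{THM}(1), while the paper rederives it from upper semicontinuity of $E$ and compactness of $L(\Phi)$; your explicit remark that $L(\Phi)$ is convex is a point the paper leaves implicit.
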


\begin{proof}
By Proposition~\ref{HLP}, the map $z \mapsto h(z)$ is upper semicontinuous on~$L(\Phi)$. Since $F$ is $C^r$ on $L(\Phi)$, we conclude that $z \mapsto E(z)$ is upper semicontinuous on $L(\Phi)$. Together with the compactness of $L(\Phi)$, this guarantees the existence of at least one point in $L(\Phi)$ maximizing the function~$E$. On the other hand, by Propositions~\ref{HLP} and~\ref{CRW} and the strict concavity of~$F$, the function $E$ is strictly concave on $L(\Phi)$ and $C^{r-1}$ on $\interior L(\Phi)$. The concavity property of $E$ implies that there is one maximizer in $\interior L(\Phi)$. Since $K(F,\Phi) \subset \interior L(\Phi)$, there exists a unique point $z^* \in \interior L(\Phi)$ such that $E(z^*)$ is a maximal value. It follows from Theorem~\ref{THM} that $K(F,\Phi) = \{z^*\}$, that is, $\nu_{z^*}$ is the unique equilibrium measure for $(F,\Phi)$. Moreover, by Lemma~\ref{BSS}, $\nu_{z^*}$ is an ergodic measure.
\end{proof}

In Example~\ref{exa6}, we have $h|_{\partial L(\phi)} \equiv 0$. It turns out that this behavior at the boundary of $L(\phi)$ is typical for some $C^r$ regular systems, even for $d>1$.
Let $\cH_{\theta}$ be the space of H\"older continuous functions with H\"older exponent $\theta> 0$. The following result is a particular case of Theorem~14 in~\cite{BSS02}.

\begin{theorem}\label{TRS}
Let $T$ be a subshift of finite type, a $C^{1+\epsilon}$ diffeomorphism with a hypebolic set, or a $C^{1+\epsilon}$ map with a repeller, that is assumed to be topologically mixing. Then there exists a residual set $\mathcal{O} \subset (\cH_{\theta})^{d}$ such that for each $\Phi \in \mathcal{O}$ we have
\begin{equation}\label{aaa}
h|_{\partial L(\Phi)} \equiv 0\quad\text{and} \quad
L(\Phi) = \overline{\interior L(\Phi)}.
\end{equation}
\end{theorem}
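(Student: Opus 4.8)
The identity $L(\Phi)=\overline{\interior L(\Phi)}$ is, for the compact convex body $L(\Phi)$, equivalent to full-dimensionality $\interior L(\Phi)\ne\emptyset$, so the plan is to treat the two identities in~\eqref{aaa} separately and to place the desired $\Phi$ in the intersection of two residual sets. Since periodic measures are weak$^*$ dense among the ergodic measures of the systems in the statement, $L(\Phi)$ is the closed convex hull of the periodic average vectors $v_p=\bigl(\tfrac1n S_n\phi_1(p),\dots,\tfrac1n S_n\phi_d(p)\bigr)$, taken over periodic points $p$ of period $n$. Hence $\interior L(\Phi)\ne\emptyset$ exactly when the family $\{v_p\}$ affinely spans $\R^d$. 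I would observe that this is both open — picking $d+1$ average vectors in general position, they remain so under a sufficiently small H\"older perturbation of $\Phi$ — and dense, so it holds on an open dense, hence residual, set $\mathcal{O}_1$, yielding the second identity in~\eqref{aaa}.

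For the first identity I would rewrite $h|_{\partial L(\Phi)}\equiv0$ in terms of entropy level sets. Because the entropy map is affine on $\cM$ and upper semicontinuous for these systems, for each $\eta>0$ the set $\cM_\eta=\{\mu\in\cM:h_\mu(T)\ge\eta\}$ is compact and convex, so its barycenter image $K_\eta(\Phi)=\{\int_X\Phi\,d\mu:\mu\in\cM_\eta\}$ is a compact convex subset of $L(\Phi)$. Since $h\ge0$, a boundary point $z\in\partial L(\Phi)$ satisfies $h(z)>0$ precisely when some $\mu\in\cM(z)$ has positive entropy, that is, when $z\in K_\eta(\Phi)\cap\partial L(\Phi)$ for some $\eta>0$. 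Therefore $h|_{\partial L(\Phi)}\equiv0$ is equivalent to $K_\eta(\Phi)\subset\interior L(\Phi)$ for every $\eta>0$, and since it suffices to take $\eta=1/k$ this becomes a countable intersection of conditions, one for each $k\in\N$ — exactly the form needed for a Baire argument.

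It then remains to show that for each fixed $\eta>0$ the set $\{\Phi:K_\eta(\Phi)\subset\interior L(\Phi)\}$ contains an open dense set. Openness follows from continuity: the map $\mu\mapsto\int_X\Phi\,d\mu$ depends on $\Phi$ uniformly over the fixed compact set $\cM_\eta$, so $\Phi\mapsto K_\eta(\Phi)$ and $\Phi\mapsto L(\Phi)$ are continuous in the Hausdorff metric, and a strict inclusion into the interior is stable under small perturbations (on $\mathcal{O}_1$, where the interior is nonempty). Density is the heart of the matter and is where I expect the main difficulty: given $\Phi_0$ and $\delta>0$, I would perturb within $\delta$ so that every extreme point of $L(\Phi)$ is realized by a zero-entropy periodic measure lying strictly outside $K_\eta(\Phi)$. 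The mechanism is ergodic optimization in disguise: an extreme point of $L(\Phi)$ in a direction $q$ is the $\Phi$-barycenter of a maximizing measure for $\langle q,\Phi\rangle$, and one can push these maximizers onto suitably placed periodic orbits, whose entropy is zero and hence below~$\eta$. The key simplification is that the directions range over the compact sphere and the desired inclusion is quantitative (a uniform gap between $K_\eta$ and $\partial L(\Phi)$), so a finite cover of the sphere reduces the task to separating $\cM_\eta$ from finitely many extremal directions using finitely many well-placed periodic orbits.

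The main obstacle is thus this density step: simultaneously controlling all extremal directions with a single small perturbation, which is why I reduce it, via compactness of the sphere and the stability of a strict inclusion, to a finite problem about placing periodic orbits in general extremal position. Granting it, $\{\Phi:K_{1/k}(\Phi)\subset\interior L(\Phi)\}$ is residual for each $k$, their intersection $\mathcal{O}_2$ is residual and gives $h|_{\partial L(\Phi)}\equiv0$, and $\mathcal{O}=\mathcal{O}_1\cap\mathcal{O}_2$ is the residual set sought. This recovers the relevant special case of Theorem~14 in~\cite{BSS02}.
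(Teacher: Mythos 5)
The paper does not actually prove this statement: it is quoted verbatim as a special case of Theorem~14 in \cite{BSS02}, so there is no internal argument to compare against. Judged on its own terms, the soft parts of your Baire-category framework are sound: $L(\Phi)$ is convex (the affine image of the convex compact set $\cM$), so $L(\Phi)=\overline{\interior L(\Phi)}$ does reduce to $\interior L(\Phi)\ne\emptyset$, which is open and dense in $(\cH_\theta)^d$; the reformulation of $h|_{\partial L(\Phi)}\equiv0$ as $K_{1/k}(\Phi)\subset\interior L(\Phi)$ for every $k\in\N$ is correct; and openness of each such condition does follow from Hausdorff continuity of $\Phi\mapsto K_\eta(\Phi)$ and $\Phi\mapsto L(\Phi)$ together with convexity of $L(\Phi)$.

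The genuine gap is the density step, which you explicitly grant rather than prove, and the mechanism you sketch for it does not close. Observe that $h|_{\partial L(\Phi)}\equiv0$ amounts to: for every $q\in S^{d-1}$, every maximizing measure of $\langle q,\Phi\rangle$ has zero entropy. Your reduction to finitely many directions is legitimate for the quantitative form of the conclusion (both $\max_{h_\mu\ge\eta}\langle q,\int_X\Phi\,d\mu\rangle$ and $\max_{\mu\in\cM}\langle q,\int_X\Phi\,d\mu\rangle$ are Lipschitz in $q$), but producing a single small H\"older perturbation that opens a definite gap between these two maxima for each such direction is precisely the hard content. The natural move --- adding a small bump $\epsilon\psi$ supported near a nearly maximizing periodic orbit --- raises $\int_X\langle q,\Phi\rangle\,d\mu$ by about $\epsilon$ for the periodic measure, but it may raise it by nearly as much for a measure of entropy at least $\eta$ that concentrates most of its mass near that orbit; excluding this requires an entropy--concentration estimate, which is the actual substance of results of this type (e.g.\ Morris's theorem that the maximizing measure of a generic H\"older function has zero entropy, and of Theorem~14 in \cite{BSS02} itself). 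As written, the argument assumes at its key point the generic absence of positive-entropy extremal measures in order to establish it. So what you have is a correct reduction of the theorem to a known, nontrivial lemma, not a proof of it.
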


We also note that in Example~\ref{exa6} with $\alpha > 0$, the function $F$ satisfies
\begin{equation}\label{ttt}
F|_{\interior L(\phi)} > \max_{z \in \partial L(\phi)}F(z),
\end{equation}
where $F_{\interior L(\phi)}$ is the restriction to $\interior L(\phi)$.
This is an example of a more general situation in which $E = h + F$ attains its maximum on $\interior L(\Phi)$:
\begin{equation}\label{aaa2}
\max_{z \in \interior L(\Phi)}E(z) > \max_{z \in \partial L(\Phi)}E(z).
\end{equation}
Note that this condition may depend not only on $F$, but also on the family of functions~$\Phi$.

A similar idea works for typical $C^r$ regular systems in the sense that they belong to the residual set $\mathcal{O}$ in Theorem \ref{TRS}. Let $(T, \Phi)$ be a $C^r$ regular pair satisfying~\eqref{aaa}. In particular, $\interior L(\Phi) \ne \emptyset$. Now let $F\colon \R^d \to \R$ be a function satisfying~\eqref{ttt} with $\phi$ replaced by~$\Phi$. Since $h \ge 0$, we have
\[
\begin{split}
\max_{z \in \partial L(\Phi)}E(z) &\le \max_{z \in \partial L(\Phi)}h(z) + \max_{z \in \partial L(\Phi)}F(z)\\
& \le h|_{\interior L(\Phi)} + \max_{z \in \partial L(\Phi)}F(z) \\
& < h|_{\interior L(\Phi)} + F|_{\interior L(\Phi)} = E_{\interior L(\Phi)},
\end{split}
\]
which implies that property \eqref{aaa2} holds. Therefore, $K(F, \Phi) \subset \interior L(\Phi)$ and so one can apply item (3) of Theorem~\ref{THM}.

For $d=1$ and $C^r$ regular systems, the function $h$ in~\eqref{hh} is strictly concave. The next examples illustrate that this may still happen for $d>1$, but unfortunately we are not able to describe for which $C^r$ regular systems the function $h$ is strictly concave.

\begin{example}[A system with $\interior L(\Phi) = \emptyset$]\label{exa8}
Let $T\colon X \to X$ be the two-sided shift with $X = \{1,2\}^\Z$. Moreover, consider the characteristic functions $\phi_1 = \chi_{C_1}$ and $\phi_2 = \chi_{C_2}$, where $C_i$ is the set of all sequences
\[
(\cdots \omega_{-1}\omega_0\omega_1 \cdots) \in X
\]
with $\omega_0 = i$. For each $\mu \in \cM$, we have $\int_X \phi_i= \mu(C_i)$ for $i=1,2$ and so
\[
L(\Phi) = \bigl\{(\mu(C_1),\mu(C_2)): \mu \in \cM \bigr\}.
\]
By Theorem~8 in~\cite{BSS02}, we have
\[
\begin{split}
h(z_1, z_2) &= \max_{\mu \in \cM}\bigl\{h_\mu(T): (\mu(C_1), \mu(C_2)) = (z_1,z_2)\bigr\}\\
& = -z_1\log z_1 - z_2 \log z_2.
\end{split}
\]
Since $\mu(C_1) + \mu(C_2) = 1$ for each $\mu \in \cM$, we obtain
\[
L(\Phi)=\bigl\{(z_1,z_2) \in [0,1] \times [0,1]: z_1+z_2 = 1\bigr\}
\]
and so $\interior L(\Phi) = \emptyset$. Moreover,
\[
h(z) = -z\log z -(1-z)\log(1-z)\quad\text{for $z \in [0,1]$}.
\]
One can easily verify that $(z_1,z_2) \mapsto h(z_1,z_2)$ is strictly concave on~$L(\Phi)$.
\end{example}

\begin{example}
Let $T\colon X \to X$ be the two-sided shift with $X = \{1,2,3\}^\Z$
and let $\phi_1 = \chi_{C_1}$ and $\phi_2 = \chi_{C_3}$. Proceeding as in Example~\ref{exa8}, we obtain
\[
h(z_1,z_2) = -z_1\log z_1 - z_2 \log z_2 - (1-z_1-z_2)\log(1-z_1-z_2)
\]
and
\[
L(\Phi) = \bigl\{(z_1,z_2) \in [0,1] \times [0,1]: z_1+ z_2 \le 1\bigr\}.
\]
Note that $\interior L(\Phi) \ne \emptyset$ and that $\partial L(\Phi)$ is the set
\[
\bigl((\R\times\{0\}) \cup (\{0\}\times\R) \cup \{(z_1,z_2): z_1+ z_2 = 1\}\bigr) \cap ([0,1] \times [0,1]).
\]
Observe that for $(z_1,z_2) = (1/2,0)$, $(0,1/2)$, $(1/2,1/2) \in \partial L(\Phi)$ we have $h(z_1,z_2) = 1 > 0$ and so the system is not typical. On the other hand, the map $(z_1,z_2) \mapsto h(z_1,z_2)$ is still strictly concave on~$L(\Phi)$.

Now consider the function $F(z_1, z_2) = \beta(z_1^2+z_2^2)/2$ with $\beta \in \R$. One can verify that the determinant of the Hessian matrix of $E = h + F$ is given~by
\[
\det H_{E}(z_1,z_2) = \beta^2 + \beta\frac{z_1(1-z_1) + z_2(1-z_2)}{z_1z_2(1-z_1-z_2)} + \frac1{z_1z_2(1-z_1-z_2)}.
\]
Since $\det H_{E}(z_1,z_2) > 0$ for $(z_1,z_2) \in \interior L(\Phi)$ and $\beta \ge 0$, every critical point of $E$ is nondegenerate for all $\beta \ge 0$. Hence, for each $\beta \ge0$, the function $E$ has at most finitely many critical points. In addition, if condition \eqref{aaa2} holds, then $E$ attains its maximal value only at critical points. Hence, by Theorem~\ref{THM} the pair $(F,\Phi)$ has finitely many equilibrium measures. For a specific example, for $\beta = 1$ one can verify that condition \eqref{aaa2} holds and so $(F,\Phi)$ has finitely many equilibrium measures.

On the other hand, for $\beta < 0$ the function $F$ is strictly concave and one can use Theorem~\ref{HDC} to conclude that $(F,\Phi)$ has a unique equilibrium measure.
\end{example}

\subsection{Coincidence of equilibrium measures}

The following result gives a sufficient condition so that two systems share equilibrium measures.
We say that $\Phi_1 = \{\phi_{1,1},\ldots,\phi_{1,d}\}$ is \emph{cohomologous} to $\Phi_2 = \{\phi_{2,1},\ldots,\phi_{2,d}\}$ if $\phi_{1,i}$ is cohomologous to $\phi_{2,i}$ for $i =1,\ldots,d$. Then
\[
\int_X\Phi_1 \,d\mu = \int_X\Phi_2 \,d\mu\quad\text{for each $\mu \in \cM$,}
\]
which readily implies that $L(\Phi_1) = L(\Phi_2)$.

\begin{proposition}\label{F1F2}
Let $(T, \Phi_1)$ and $(T,\Phi_2)$ be $C^r$ regular pairs such that $\Phi_1$ is cohomologous to $\Phi_2$ and let $F_1\colon L(\Phi_1) \to \R$ and $F_2\colon L(\Phi_2) \to \R$ be $C^r$ functions. If $z \in \interior L(\Phi_1) \cap \interior L(\Phi_2)$ is simultaneously a maximizer of the functions $E_1= h_1 + F_1$ and $E_2= h_2 + F_2$, then $\nu_{z}$ is an equilibrium measure for $(F_1, \Phi_1)$ and $(F_2,\Phi_2)$.
\end{proposition}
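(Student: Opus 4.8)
The plan is to use the cohomology hypothesis to collapse the two systems onto a single entropy function and a single family of maximal-entropy measures, and then to read off both equilibrium claims from Lemmas~\ref{FZ4} and~\ref{EQU}.

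First I would extract the consequences of $\Phi_1$ being cohomologous to $\Phi_2$. As recorded just before the statement, each coboundary integrates to zero against any invariant measure, so $\int_X \Phi_1 \, d\mu = \int_X \Phi_2 \, d\mu$ for every $\mu \in \cM$; hence the two barycenter maps coincide and $L(\Phi_1) = L(\Phi_2) =: L$. The decisive point is that the level sets also coincide: for each $z$,
\[
\cM(z) = \Bigl\{\mu \in \cM : \textstyle\int_X \Phi_1 \, d\mu = z\Bigr\} = \Bigl\{\mu \in \cM : \textstyle\int_X \Phi_2 \, d\mu = z\Bigr\}.
\]
Taking the supremum of $h_\mu(T)$ over this common set, I conclude that the two entropy functions defined in~\eqref{hh} agree, say $h_1 \equiv h_2 =: h$ on $L$. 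Consequently, for $z \in \interior L$ Lemma~\ref{FZ4} characterizes the entropy-maximizing measure in $\cM(z)$ as unique and identifies it with $\nu_z$; since $\cM(z)$ does not refer to a particular family, the ergodic measure $\nu_z$ produced by Lemma~\ref{BSS} is literally the same object for $(T,\Phi_1)$ and $(T,\Phi_2)$. This is what gives the single symbol $\nu_z$ in the statement an unambiguous meaning.

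With this identification the two assertions become symmetric, so I would argue for $(F_1,\Phi_1)$ and copy the argument verbatim for $(F_2,\Phi_2)$. Since $\nu_z \in \cM(z)$ we have $\int_X \Phi_1 \, d\nu_z = z$, and by Lemma~\ref{FZ4} its entropy equals $h(z)$. Therefore
\[
h_{\nu_z}(T) + F_1\Bigl(\textstyle\int_X \Phi_1 \, d\nu_z\Bigr) = h(z) + F_1(z) = E_1(z).
\]
Because $z$ maximizes $E_1 = h + F_1$ on $L$, Lemma~\ref{EQU} (equivalently, the variational principle in Theorems~\ref{VP} and~\ref{VPC}) shows that $E_1(z)$ is the maximal value of $E_1$, which equals $P_{F_1}(\Phi_1)$. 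Hence the displayed quantity equals $P_{F_1}(\Phi_1)$ and $\nu_z$ is an equilibrium measure for $(F_1,\Phi_1)$; the same computation with $F_2$ and $E_2$ gives that $\nu_z$ is an equilibrium measure for $(F_2,\Phi_2)$.

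I expect the only genuine subtlety to be the one handled in the second paragraph: verifying that the two a priori distinct measures furnished by Lemma~\ref{BSS} for $\Phi_1$ and for $\Phi_2$ are in fact the same measure, rather than merely two equilibrium measures of the cohomologous auxiliary potentials $\psi_z$. The safe route is to avoid comparing those potentials directly and instead to use the intrinsic characterization of $\nu_z$ as the unique maximizer of $h_\mu(T)$ over the level set $\cM(z)$ (Lemma~\ref{FZ4}), since this set is manifestly unchanged when $\Phi_1$ is replaced by $\Phi_2$. Everything else is a direct substitution, and no estimate beyond the results already established is required.
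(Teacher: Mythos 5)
Your proof is correct, and it takes a genuinely different --- and in fact more economical --- route than the paper's. The paper establishes $\nu_{1,z}=\nu_{2,z}$ by going back to the multifractal machinery of~\cite{BSS02}: it introduces the functions $L_i(q)=P(\langle q,\Phi_i-z\rangle)-h_i(z)$, proves $h_1=h_2$ via the coincidence of the level sets $C_z(\Phi_1)=C_z(\Phi_2)$ (using $\|S_n\phi_{1,i}-S_n\phi_{2,i}\|_\infty/n\to 0$), deduces $L_1\equiv L_2$ because $\langle q,\Phi_1-\Phi_2\rangle$ is a coboundary, and then identifies the minimizers $q_1(z)=q_2(z)$ so that the two auxiliary potentials $\langle q_i(z),\Phi_i-z\rangle-h(z)$ are cohomologous and hence share their unique equilibrium measure. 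You bypass all of this with the intrinsic characterization from Lemma~\ref{FZ4}: since coboundaries integrate to zero against invariant measures, the level sets $\cM(z)$ --- and hence the entropy function $h$ of~\eqref{hh} --- are literally the same for $\Phi_1$ and $\Phi_2$, and $\nu_z$ is pinned down as the unique entropy maximizer over $\cM(z)$, an object that makes no reference to which family produced it. The conclusion then follows from Lemma~\ref{EQU} just as in the paper. What your route buys is that it sidesteps the step the paper leaves implicit (equality $q_1(z)=q_2(z)$ still leaves two distinct potentials, whose equilibrium measures coincide only because those potentials are themselves cohomologous); what the paper's route buys is the explicit realization of $\nu_z$ as a classical equilibrium measure for a specific potential, which is reused in the theorem that follows. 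Both arguments invoke the $C^r$-regularity hypothesis in the same way, through Lemmas~\ref{BSS} and~\ref{FZ4}.
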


\begin{proof}
Since $\Phi_1$ is cohomologous to $\Phi_2$, we have $L:= L(\Phi_1) = L(\Phi_2)$. Now take $z \in \interior L$ and consider the functions
\[
L_1(q)= P(\langle q, \Phi_1 - z\rangle) - h_1(z)\quad \text{and}\quad
L_2(q)= P(\langle q, \Phi_2-z\rangle) - h_2(z),
\]
where $P$ denotes the classical topological pressure and where each $h_i$ is the corresponding entropy function (see~\eqref{hh}). By the cohomology assumption, we have
\[
\lim_{n \to \infty}\frac{\|S_n\phi_{1,i} - S_n\phi_{2,i}\|_{\infty}}{n} = 0 \quad \text{for $i =1,\ldots,d$}
\]
and so $C_{z}(\Phi_1) = C_{z}(\Phi_2)$ for all $z \in \R^d$ (see~\eqref{Cz}). In particular, this implies that $h := h_1 = h_2$. Therefore,
\[
[\langle q, \Phi_1-z\rangle - h_1(z)]-[\langle q, \Phi_2-z\rangle - h_2(z)] = \langle q, \Phi_1 - \Phi_2\rangle
\]
for $q \in \R^d$. Again since $\Phi_1$ is cohomologous to $\Phi_2$, we conclude that
\begin{equation}\label{gqq}
L_1(q) = L_2(q)\quad\text{for $q \in \R^d$.}
\end{equation}

On the other hand, by the proof of Theorem~8 in~\cite{BSS02} the function $q\mapsto L_1(q)$ attains its minimum at a point $q_1(z)$ and $\nu_{1,z}$ is the unique equilibrium measure for the function $\langle q_1(z), \Phi_1 - z\rangle - h(z)$. Similarly, $q \mapsto L_2(q)$ attains its minimum at a point $q_2(z)$ and $\nu_{2,z}$ is the unique equilibrium measure for the function $\langle q_2(z), \Phi_2 - z\rangle - h(z)$. By \eqref{gqq}, one can take $q_1(z) = q_2(z)$ and so $\nu_z:= \nu_{1,z} = \nu_{2,z}$. The desired result follows now from Theorem~\ref{THM}.
\end{proof}

A direct consequence of Proposition~\ref{F1F2} is that if $\Phi_1$ is cohomologous to~$\Phi_2$ and the functions $E_1$ and $E_2$ attain maximal values at the same points, then $(F_1, \Phi_1)$ and $(F_2, \Phi_2)$ have the same equilibrium measures (in~particular, this happens when $F_1 = F_2$).
For the converse to hold we need stronger conditions so that the coincidence of two equilibrium measures yields a cohomology relation.

\begin{theorem}
Let $X$ be a topologically mixing locally maximal hyperbolic set for a diffeomorphism~$T$ and let $\Phi_1$ and $\Phi_2$ be families of H\"older continuous functions.
Moreover, let $F_1$ and $F_2$ be functions such that $E_1$ and $E_2$ attain maximal values in $\interior L(\Phi_1)$ and $\interior L(\Phi_2)$, respectively. If~$(F_1,\Phi_1)$ and $(F_2,\Phi_2)$ have the same equilibrium measures, then for each $z_1$ and~$z_2$ maximizing $E_1$ and~$E_2$, respectively, there exist $q_1, q_2\in \R^d$ such that $\langle q_1, \Phi_1 - z_1\rangle$ is cohomologous to $\langle q_2, \Phi_2-z_2\rangle$.
\end{theorem}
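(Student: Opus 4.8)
The plan is to reduce the statement to the classical fact that, on a topologically mixing locally maximal hyperbolic set, two H\"older continuous potentials share the same equilibrium (Gibbs) measure if and only if their difference is cohomologous to a constant. First I would fix maximizers $z_1\in\interior L(\Phi_1)$ of $E_1$ and $z_2\in\interior L(\Phi_2)$ of $E_2$. Since $E_1$ attains its maximum in the interior (so that $K(F_1,\Phi_1)\subset\interior L(\Phi_1)$), item~(3) of Theorem~\ref{THM} together with Lemma~\ref{BSS} shows that the associated equilibrium measure $\nu_{z_1}$ is ergodic and is the unique equilibrium measure of
\[
\psi_{z_1} = \langle q_1, \Phi_1 - z_1\rangle - h_1(z_1)
\]
for some $q_1=q_1(z_1)\in\R^d$. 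As adding a constant does not change the equilibrium measures, $\nu_{z_1}$ is the unique equilibrium (Gibbs) measure of the H\"older potential $g_1 := \langle q_1, \Phi_1 - z_1\rangle$; similarly $\nu_{z_2}$ is the unique equilibrium measure of $g_2 := \langle q_2, \Phi_2 - z_2\rangle$ for some $q_2\in\R^d$. Both $g_1$ and $g_2$ are H\"older continuous, being finite linear combinations of H\"older continuous functions.

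Next I would exploit the hypothesis that $(F_1,\Phi_1)$ and $(F_2,\Phi_2)$ have the same equilibrium measures. By item~(3) of Theorem~\ref{THM}, these two coinciding sets are exactly $\{\nu_{z}: z \text{ maximizes } E_1\}$ and $\{\nu_{z}: z \text{ maximizes } E_2\}$. Hence for the chosen $z_1$ the measure $\nu_{z_1}$ is an equilibrium measure for $(F_2,\Phi_2)$, so it coincides with $\nu_{z_2}$ for the matching maximizer $z_2$ of $E_2$; set $\nu := \nu_{z_1} = \nu_{z_2}$. Thus $g_1$ and $g_2$ are two H\"older potentials on a topologically mixing locally maximal hyperbolic set having the same Gibbs measure~$\nu$.

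The core step is then to invoke the Livšic--Bowen cohomology characterization (see, e.g., \cite{Bow75, PPo}): there exist a constant $c\in\R$ and a H\"older continuous function $u\colon X\to\R$ with $g_1 - g_2 = c + u\circ T - u$. To identify~$c$, I would integrate this identity against the common measure~$\nu$. Because $\nu=\nu_{z_1}\in\cM(z_1)$ we have $\int_X\Phi_1\,d\nu = z_1$, whence $\int_X g_1\,d\nu = \langle q_1, \int_X\Phi_1\,d\nu - z_1\rangle = 0$, and likewise $\int_X g_2\,d\nu = 0$; since the coboundary integrates to zero, we obtain $c=0$. Therefore $g_1 = g_2 + u\circ T - u$, that is, $\langle q_1, \Phi_1 - z_1\rangle$ is cohomologous to $\langle q_2, \Phi_2 - z_2\rangle$, which is the claim.

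The main obstacle I anticipate is not the (standard) cohomology theorem itself but two bookkeeping points around it. First, one must guarantee that the maximizer $z_2$ matched to $z_1$ exists, which is exactly what the coincidence of the two equilibrium-measure sets provides. Second, and more delicate, is pinning the additive constant to zero: it is precisely the recentering by $z_i$ in $\psi_{z_i}$ (equivalently, the fact that $\nu\in\cM(z_1)\cap\cM(z_2)$) that forces $\int_X g_1\,d\nu = \int_X g_2\,d\nu = 0$ and hence $c=0$; without this normalization one would only obtain cohomology up to a nonzero constant, which would not match the definition of cohomologous used here.
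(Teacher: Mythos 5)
Your proposal is correct and follows the same skeleton as the paper's proof: identify each equilibrium measure via Theorem~\ref{THM} and Lemma~\ref{BSS} as the unique classical equilibrium measure of a linear combination $\langle q_i,\Phi_i-z_i\rangle$ (up to an additive constant), use the coincidence hypothesis to get a single common measure $\nu=\nu_{z_1}=\nu_{z_2}$, and then invoke the standard cohomology characterization of H\"older potentials sharing an equilibrium state on a mixing hyperbolic set. The one place where you genuinely diverge is in pinning down the additive constant. The paper does this in two steps: it shows $P(\psi_1)=P(\psi_2)$ by appealing to $L_1(q_1(z_1))=L_2(q_2(z_2))=0$ (Lemma~2 in \cite{BSS02}), and then cancels the remaining constants $-h_1(z_1)+h_2(z_2)$ by noting $h_1(z_1)=h_{\nu}(T)=h_2(z_2)$ via Lemma~\ref{FZ4}. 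You instead integrate the identity $g_1-g_2=c+u\circ T-u$ against the common invariant measure $\nu$ and use $\nu\in\cM(z_1)\cap\cM(z_2)$ to get $\int_X g_1\,d\nu=\int_X g_2\,d\nu=0$, hence $c=0$. Your normalization is more elementary and self-contained (it bypasses both auxiliary lemmas), while the paper's route makes explicit the pressure and entropy identities $P(\psi_1)=P(\psi_2)$ and $h_1(z_1)=h_2(z_2)$, which carry some independent interest. Both arguments share the same looseness present in the paper itself, namely the implicit matching of $z_2$ to $z_1$ so that $\nu_{z_1}=\nu_{z_2}$, so this is not a gap relative to the paper's own treatment.
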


\begin{proof}
By Theorem~\ref{THM}, each equilibrium measure for $(F_i,\Phi_i)$ is a measure $\nu_{z_i}$ with $z_i \in K(F_i,\Phi_i)$ that is the unique equilibrium measure for
\[
\psi_i = \langle q_i(z_i), \Phi_i - z_i\rangle - h_i(z_i),
\]
where $q_i(z_i)$ is a minimizer of the function
\[
L(q) = P(\langle q, \Phi_i - z_i\rangle) - h_i(z_i).
\]
Since by hypotheses $\nu_{z_1} = \nu_{z_2}$, the function $\psi_1-\psi_2$ is cohomologous to $P(\psi_1) - P(\psi_2) \in \R$. But since
\[
L_1(q_1(z_1)) = L_2(q_2(z_2)) = 0
\]
(see Lemma~2 in~\cite{BSS02}), we have $P(\psi_1) =P(\psi_2)$. So there exists a continuous function $S= S(z_1,z_2): X \to \R$ such that $\psi_1- \psi_2 = S\circ T - S$, that is,
\[
\begin{split}
S \circ T - S & = \langle q_1(z_1), \Phi_1 - z_1\rangle - \langle q_2(z_2), \Phi_2 - z_2\rangle - h_1(z_1) + h_2(z_2).
\end{split}
\]
Again since $\nu_{z_1} = \nu_{z_2}$, by Lemma~\ref{FZ4} we have
\[
h_1(z_1) = h_{\nu_{z_1}}(T) = h_{\nu_{z_2}}(T) = h_2(z_2).
\]
Hence, for each $z_1$ and~$z_2$ maximizing $E_1$ and~$E_2$, respectively, there exist points $q_1=q_1(z_1), q_2=q_2(z_2) \in \R^d$ as in the statement of the theorem.
\end{proof}

\bibliographystyle{alpha}

\begin{thebibliography}{88}
	
\bibitem[Bar11]{Bar11} L. Barreira, \emph{Thermodynamic Formalism and Applications to Dimension Theory}, Progress in Mathematics, vol.~294, Birkh\"auser/Springer Basel AG, Basel, 2011.
	
\bibitem[BSS02]{BSS02} L. Barreira, B. Saussol and J. Schmeling, \emph{Higher-dimensional multifractal analysis}, J. Math. Pures Appl. \textbf{81} (2002), 67--91.
	
\bibitem[Bow74]{Bow75} R. Bowen, \emph{Some systems with unique equilibrium states}, Math. Systems Theory \textbf{8} (1974/75), 193--202.

\bibitem[Bow75]{B} R. Bowen, \emph{Equilibrium States and Ergodic Theory of Anosov Diffeomorphisms}, Lect. Notes in Math. 470, Springer, 1975.

\bibitem[BL20]{BL20} J. Buzzi and R. Leplaideur, \emph{Nonlinear thermodynamic formalism}, preprint arXiv: 2002.00576, 2020.

\bibitem[CLT20]{CLT20} D. Constantine, J.-F. Lafont and D. Thompson, \emph{The weak specification property for geodesic flows on CAT(-1) spaces}, Groups Geom. Dyn. \textbf{14} (2020), 297--336.

\bibitem[KH95]{KH} A. Katok and B. Hasselblatt, \emph{Introduction to the Modern Theory of Dynamical Systems}, Encyclopedia of Mathematics and its Applications 54, Cambridge University Press, 1995.

\bibitem[Kel98]{Ke} G. Keller, \emph{Equilibrium States in Ergodic Theory}, London Mathematical Society Student Texts 42, Cambridge University Press, 1998.

\bibitem[Lep15]{chaos} R. Leplaideur, \emph{Chaos: butterflies also generate phase transitions}, J. Stat. Phys. \textbf{161} (2015), 151--170.

\bibitem[LW17]{BL7} R. Leplaideur and F. Watbled, \emph{Generalized Curie--Weiss model and quadratic pressure in ergodic theory}, Bull. Soc. Math. France \textbf{147} (2019), 197--219.

\bibitem[LW20]{BLx} R. Leplaideur and F. Watbled, \emph{Generalized Curie--Weiss--Potts model and quadratic pressure in ergodic theory}, preprint arXiv: 2003.09535, 2020.

\bibitem[Mis76]{Mis76} M. Misiurewicz, \emph{A short proof of the variational principle for a $\Z_{+}^{N}$ action on a compact space}, International Conference on Dynamical Systems in Mathematical Physics (Rennes, 1975), Ast\'erisque, No.~40, Soc. Math. France, Paris, 1976, pp. 147--157.

\bibitem[PP90]{PPo} W. Parry and M. Pollicott, \emph{Zeta Functions and the Periodic Orbit Structure of Hyperbolic Dynamics}, Ast\'erisque 187-188, 1990.

\bibitem[Pes97]{Pes97} Ya. Pesin, \emph{Dimension Theory in Dynamical Systems. Contemporary Views and Applications}, Chicago Lectures in Mathematics, University of Chicago Press, Chicago, IL, 1997.

\bibitem[PP84]{PP84} Ya. Pesin and B. Pitskel', \emph{Topological pressure and the variational principle for noncompact sets}, Functional Anal. Appl. \textbf{18} (1984), 307--318.

\bibitem[Rue73]{R1} D. Ruelle, \emph{Statistical mechanics on a compact set with $\Z^\nu$ action satisfying expansiveness and specification}, Trans. Amer. Math. Soc. \textbf{185} (1973), 237--251.

\bibitem[Rue78]{Ru} D. Ruelle, \emph{Thermodynamic Formalism}, Encyclopedia of mathematics and its applications 5, Addison-Wesley, 1978.

\bibitem[Tha80]{thaler} M. Thaler, Maximilian, \emph{Estimates of the invariant densities of endomorphisms with indifferent fixed points}, Israel J. Math. \textbf{37} (1980), 303--314.

\bibitem[Wal76]{W} P. Walters, \emph{A variational principle for the pressure of continuous transformations}, Amer. J. Math. \textbf{97} (1976), 937--971.

\bibitem[Wal82]{Wa} P. Walters, \emph{An Introduction to Ergodic Theory}, Graduate Texts in Mathematics 79, Springer, 1982.

\bibitem[Wol20]{Wol20} C. Wolf, \emph{A shift map with a discontinuous entropy function}, Discrete Contin. Dyn. Syst. \textbf{40} (2020), 319--329.

\end{thebibliography}

\end{document}